\renewcommand{\setminus}{{\smallsetminus}}
\newcommand{\HH}{{\mathbb{H}}}
\newcommand{\RR}{{\mathbb{R}}}
\newcommand{\ZZ}{{\mathbb{Z}}}
\newcommand{\QQ}{{\mathbb{Q}}}
\newcommand{\vol}{{\rm vol}}
\newcommand{\area}{{\rm area}}
\newcommand{\lam}{{\lambda_1}}
\newcommand{\abs}[1]{{\left\vert #1 \right\vert}}
\newcommand{\lmin}{{\ell_{\rm min}}}
\def\dfr{double coil }
\theoremstyle{plain}
\newtheorem{theorem}{Theorem}[section]
\newtheorem{corollary}[theorem]{Corollary}
\newtheorem{lemma}[theorem]{Lemma}
\newtheorem{prop}[theorem]{Proposition}
\newtheorem*{namedtheorem}{\theoremname}
\newcommand{\theoremname}{testing}
\newenvironment{named}[1]{\renewcommand{\theoremname}{#1}\begin{namedtheorem}}{\end{namedtheorem}}
\theoremstyle{definition}
\newtheorem{define}[theorem]{Definition}
\newtheorem{remark}[theorem]{Remark}
\begin{document}
\title[On diagrammatic bounds of knot volumes and spectral
invariants]{On diagrammatic bounds of knot volumes \\ and spectral
invariants}
\author[D. Futer]{David Futer}
\author[E. Kalfagianni]{Efstratia Kalfagianni}
\author[J. Purcell]{Jessica S. Purcell}

\address[]{Department of Mathematics, Temple University,
Philadelphia, PA 19122}

\email[]{dfuter@math.temple.edu}

\address[]{Department of Mathematics, Michigan State University, East
Lansing, MI, 48824}

\email[]{kalfagia@math.msu.edu}

\address[]{ Department of Mathematics, Brigham Young University,
Provo, UT 84602}

\email[]{jpurcell@math.byu.edu }

\thanks{{E.K. is supported in part by NSF grant DMS--0306995 and
NSF--FRG grant DMS-0805942.}}

\thanks{{J.P. is supported in part by NSF grant DMS-0704359.}}

\thanks{ \today}

\begin{abstract}  
In recent years, several families of hyperbolic knots have been shown
to have both volume and $\lam$ (first eigenvalue of the Laplacian) 
bounded in terms of the twist number of
a diagram, while other families of knots have volume bounded by a
generalized twist number.  We show that for general knots, neither the
twist number nor the generalized twist number of a diagram can provide
two--sided bounds on either the volume or $\lam$.  We do so by
studying the geometry of a family of hyperbolic knots that we call
\emph{\dfr knots}, and finding two--sided bounds in terms of the knot
diagrams on both the volume and on $\lam$.  We also extend a result of
Lackenby to show that a collection of \dfr knot complements forms an
\emph{expanding family} iff their volume is bounded.
\end{abstract}

\maketitle

\section{Introduction}
For any diagram of a knot, there is an associated 3--manifold: the
complement of the knot in the 3--sphere.  In the 1980's, Thurston
proved that the complement of any non-torus, non-satellite knot admits
a hyperbolic metric \cite{bonahon:geometries}, which is necessarily
unique up to isometry.  As a result, geometric information about a
knot complement, such as volume and the spectrum of the Laplacian,
gives topological knot invariants.  However, in practice, these
invariants have been difficult to estimate with only a diagram of a
knot.

Recently, there has been some progress in estimating geometric
information from particular classes of diagrams.  For volumes,
Lackenby showed that the volume of an alternating knot complement is bounded
above and below in terms of the twist number of an alternating diagram
\cite{lackenby:volume} (see Definition \ref{def:twist-num} below).  We
extended these results to highly twisted knots \cite{fkp:volume} and to sums
of alternating tangles \cite{fkp:tangles}. Purcell used a generalization of the twist number to find volume  lower
bounds for additional classes of knots
\cite{purcell:aug-ref-tw}, while in  
\cite{fkp:farey} we showed that the volume of a
closed 3--braid is bounded above and below in terms of
the generalized twist number of the braid.  More recently, Lackenby showed that for
alternating and highly twisted knots, the first eigenvalue $\lam$ of
the Laplacian can be estimated in terms of the twist number
\cite{lackenby:spectrum}.  Based on these examples, one might hope
that a suitable generalization of the twist number of a diagram
controls the geometry of all hyperbolic knot complements.

In this paper, we show that the twisting in a diagram cannot give
two--sided geometric bounds for general knots.  We do so by presenting
a class of knots, called \emph{\dfr knots}, for which the volume can
be made bounded while the twist number becomes arbitrarily large, or
the volume can be made unbounded while the generalized twist number
stays constant.  
Similarly, we show 
that $\lam$ can stay
bounded while the twist number becomes arbitrarily large, or $\lam$
can approach $0$ while the generalized twist number stays constant.


To state our results more precisely, we need a few definitions.

\begin{define}\label{def:twist-num}
	A diagram of a knot is a $4$--valent graph with over--under crossing
	information at each vertex.  A \emph{twist region} of a diagram is a
	portion of the diagram consisting of bigons arranged end to end,
	which is maximal in the sense that there are no additional bigons
	adjacent to either end.  A single crossing not adjacent to any
	bigons is also a twist region.  See Figure \ref{fig:twists}, left. Note
	that a twist region containing $c$ crossings corresponds to $c/2$
	full twists of the two strands.

	The number of twist regions in a particular diagram $D$ is called
	the \emph{twist number of $D$}.  The minimum of the twist numbers of
	$D$ as $D$ ranges over all diagrams of a knot $K$ is defined to be
	the \emph{twist number} of $K$, and is denoted $\tau(K)$.
\end{define}
	
\begin{figure}[h]
\includegraphics{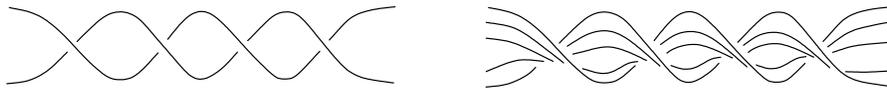}
\caption{Left: a twist region.  Two strands twist about each other
	maximally.  Right: a generalized twist region with two full twists.}
\label{fig:twists}
\end{figure}

\begin{define}\label{def:gen-twist-num}
	A \emph{generalized twist region} on $q$ strands, $q\geq 2$, is a
	region of a knot diagram consisting of $q$ strands twisted
	maximally.  That is, if the $q-2$ innermost strands are removed from
	a generalized twist region on $q$ strands, then the remaining two
	strands form a twist region as in Definition \ref{def:twist-num}.
	These two outermost strands bound a twisted, rectangular ribbon.
	The additional $q-2$ strands are required to run parallel to the two
	outermost strands, embedded on this ribbon. By definition,
	a twist region is a generalized twist on $q=2$ strands. See Figure
	\ref{fig:twists}, right.
	
	In a given diagram $D$, there is typically more than one way to 
	partition the crossings of $D$ into generalized twist regions. For 
	example, a single generalized twist region can contain many ordinary
	twist regions. The \emph{generalized twist number of $D$} is defined 
	to be the smallest number of generalized twist regions, minimized over
	all partitions of $D$ into generalized twist regions.
\end{define}

\begin{figure}[h]
\begin{center}
\vspace{-0.2in}
\includegraphics{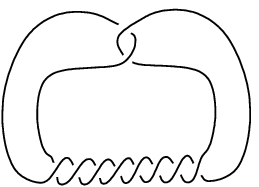}
\hspace{.5in}
\includegraphics{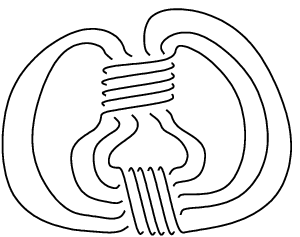}
\end{center}
\caption{
Left: a $(1,2)$ \dfr knot. Right: a $(3,5)$ \dfr knot.
}
\label{fig:dfr-knot}
\end{figure}

\begin{define}\label{def:dfr-knot}
A \emph{\dfr knot} is a knot with exactly two generalized twist
regions, where each twist region contains $q \geq 2$ strands and an
integral number of full twists.  At each end of each generalized twist region,
$p<q$ strands split off to the right, while $q-p$ strands split off to the left. 
A knot $K$ with this description is called a \emph{$(p,q)$ \dfr knot}. Note that 
$K$ will be a knot precisely when $p$ and $q$ are relatively prime.

The integers $p$ and $q$, together with the number of full twists in each generalized twist region, completely specify a diagram of a \dfr knot. See Figure \ref{fig:dfr-knot} for two examples.
\end{define}

Note that when $q=2$ and one of the two generalized twist regions
contains exactly one full twist, then corresponding \dfr knot is a
twist knot. See Figure \ref{fig:dfr-knot}, left.  Thus \dfr knots
can be seen as a generalization of twist knots.

Every \dfr knot is also a special case of a \emph{double torus knot}:
it can be embedded on an unknotted genus--2 surface in $S^3$.  To
visualize this genus--2 surface, start with the sphere obtained by
compactifying the projection plane, and add one handle for each
generalized twist region.  Then, in each region of Figure
\ref{fig:dfr-knot}, the coils run around the cylinder of the handle.
The family of double torus knots has been studied extensively (see
e.g. \cite{hill:double-torus, hill-murasugi:double-torus,
hirasawa-murasugi:survey}).


In Section \ref{sec:augmented}, we prove the following two--sided, combinatorial estimate on the volumes of double coil knots.

\begin{named}{Theorem  \ref{thm:simple-volume}}
Let $p$ and $q$ be relatively prime integers with $0<p<q$, and let $k$ be the length of the continued fraction expansion of $p/q$. Let $K$ be a $(p,q)$ double coil knot, in which each generalized twist region has at least $4$ full twists. Then $K$ is hyperbolic, and
$$0.9718 \, k - 0.3241 \: \leq \:  \vol(S^3 \setminus K) \: < \: 4v_8k,$$
where $v_8 = 3.6638...$ is the volume of a regular ideal octahedron in
$\HH^3$.
\end{named}

The length of the continued fraction expansion of $p/q$ turns out to
be unrelated to either the twist number or the generalized twist
number of a $(p,q)$ \dfr knot. As a result, we can show that neither
of those quantities predicts the volume of $K$.

\begin{named}{Theorem \ref{thm:no-easy-estimate}}
The volumes of hyperbolic \dfr knots are not effectively predicted by
either the twist number or generalized twist number.  More precisely:
\begin{enumerate}
\item[(a)] For any $q \geq 3$, and any $p$ relatively prime to $q$,
there exists a sequence $K_n$ of $(p,q)$ \dfr knots such that
$\tau(K_n) \to \infty$ while $\vol(S^3 \setminus K_n)$ stays bounded.
\item[(b)] All double coils have generalized twist number $2$, but their
volumes are unbounded.
\end{enumerate}
\end{named}

Theorem \ref{thm:no-easy-estimate} implies that the known upper bounds
on volume in terms of twist number can be quite ineffective.  Lackenby
initially found an upper bound on volume that was linear in terms of
twist number \cite{lackenby:volume}.  Agol and D. Thurston improved
the constants in Lackenby's estimate, and showed that the upper bound
is asymptotically sharp for a particular family of alternating links
\cite[Appendix]{lackenby:volume}.  However, for the \dfr knots of
Theorem \ref{thm:no-easy-estimate}, the volume is bounded but the
estimate in terms of twist number will become arbitrarily large. This
phenomenon occurs in much greater generality: see Theorem
\ref{thm:big-twisting} for the most general statement, and Corollary
\ref{cor:braids} for an application to $m$--braids.


For a Riemannian manifold $M$, $\lam(M)$ is defined to be the smallest
positive eigenvalue of the Laplace--Beltrami operator $\Delta f = -
{\rm div \, grad} f$. It turns out that the volume and $\lam$ of a
\dfr knot are closely related. In Section \ref{sec:laplacian}, we show
the following result.

\begin{named}{Theorem \ref{thm:dfr-lambda-bound}}
Let $K$ be a hyperbolic \dfr knot. Then
$$ \frac{A_1}{\vol(S^3 \setminus K)^2} \: \leq \: \lam(S^3 \setminus
K) \: \leq \: \frac{A_2}{\vol(S^3 \setminus K)},$$
where $A_1 \geq  8.76 \times 10^{-15}$ and $A_2 \leq 12650$.
\end{named}

Combining Theorem \ref{thm:no-easy-estimate} with Theorem
\ref{thm:dfr-lambda-bound} immediately gives the following.

\begin{named}{Corollary \ref{cor:lambda-twist}}
The first eigenvalue of the Laplacian of hyperbolic \dfr knots is not
effectively predicted by either the twist number or generalized twist
number.
More precisely:
\begin{enumerate}
\item[(a)] For any $q \geq 3$, and any $p$ relatively prime to $q$,
there exists a sequence $K_n$ of $(p,q)$ double coils such that
$\tau(K_n) \to \infty$ while $\lam(S^3 \setminus K_n)$ is bounded away
from $0$ and $\infty$.
\item[(b)] All \dfr knots have generalized twist number $2$, but the
	infimum of $\{\lam(S^3 \setminus K_n)\}$ is zero.
\end{enumerate}
\end{named}


Theorem \ref{thm:dfr-lambda-bound} also extends a result of
Lackenby about expanding families.  Recall that a collection $\{M_i\}$
of Riemannian manifolds is called an \emph{expanding family} if $\inf
\lam(M_i)>0$.  Lackenby showed that knots whose volumes are bounded
above and below by the twist number form an expanding family if and
only if their volumes are bounded \cite[Theorem
1.7]{lackenby:spectrum}.  Even though the volumes of \dfr knots are
very far from being governed by the twist number, Theorem 
\ref{thm:dfr-lambda-bound} implies that a sequence of \dfr knots forms 
an expanding family if and only if their volumes are bounded.

\medskip

This paper is organized as follows. In section \ref{sec:augmented}, we
study the geometry and combinatorics of a certain surgery parent of \dfr knots.
The volume estimates for these parent links lead to volume estimates for \dfr
knots in Theorem \ref{thm:simple-volume}. In Section \ref{sec:volume-twist}, we construct hyperbolic knots that have bounded volume but arbitrarily
large twist number. In Section \ref{sec:laplacian}, we describe the connection between
the volume of a \dfr knot and its first eigenvalue $\lam$.  Our main tool here is 
Theorem \ref{thm:lambda-general},
which gives two--sided bounds for $\lam$ of a
finite--volume hyperbolic 3--manifold, in terms of the volume and the
Heegaard genus.

\section{Volume estimates for \dfr knots}\label{sec:augmented}

In this section, we study the volumes of \dfr knots. We begin by showing that a $(p, q)$ \dfr 
knot is obtained by Dehn filling
a certain 3--component link, closely related to the 2--bridge link of slope $p/q$.
Next, we obtain two sided diagrammatic bounds on volumes of these parent links.  
Finally, we apply a result of the authors \cite{fkp:volume} to bound the change in volume 
under Dehn filling, obtaining two-sided diagrammatic
estimates on the volume of the \dfr knots.

\subsection{Augmentations of \dfr knots}

A twist knot as in Figure \ref{fig:dfr-knot}(a) may be viewed as a Dehn filling of the Whitehead link,
which is itself a Dehn filling of the Borromean rings.  Similarly, we
may view \dfr knots as Dehn fillings of a class of link complements in
$S^3$.  The idea is as follows.  At each of the two generalized twist
regions of a \dfr knot, insert a \emph{crossing circle} $C_i$, namely a
simple closed curve encircling all $q$ strands of the generalized
twist.  The complement of the resulting three--component link is
homeomorphic to the complement of the three--component link with all
full twists removed from each twist region.  Examples of such links
are shown in Figure \ref{fig:dfr-link}.  We call such a link the
\emph{augmentation} of a \dfr knot.

\begin{figure}[h]
\begin{center}
	\includegraphics{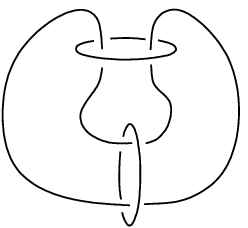}
	\hspace{.5in}
	\includegraphics{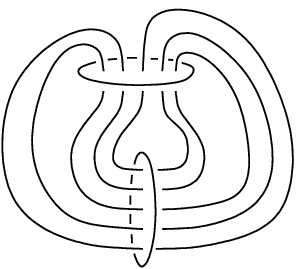}
\end{center}
\caption{Examples of links obtained by adding crossing circles to \dfr
	knots and untwisting.}
\label{fig:dfr-link}
\end{figure}



The augmentation of a $(p,q)$ \dfr knot has a
simple description in terms of the rational number $p/q$, as
follows.  The augmentation consists of three components.  Two, namely
$C_1$ and $C_2$, can be isotoped to lie orthogonal to the projection
plane, bounding simple disjoint disks $D_1$ and $D_2$
in $S^3$.  The third component can be isotoped to be a nontrivial
simple closed curve embedded on the projection plane, disjoint from
the intersections of $C_1$ and $C_2$ with the projection plane. We
adopt the convention that the projection plane contains a point at
infinity, forming a sphere in $S^3$. Note that the projection sphere
minus the four points of intersection with $C_1$ and $C_2$ is a
4--punctured sphere $S$.  Once we have determined a framing for $S$,
any simple closed curve can be described by a number in $ \QQ \cup \{1/0\}$.
		
We choose our framing as follows.  Let $1/0 = \infty$ be the simple
closed curve on $S$ that is disjoint from $D_1$ and $D_2$, and
separates those disks from each other.
Now, draw a straight arc $A$ connecting one of the punctures of $C_1$
with one of $C_2$, as in Figure \ref{fig:gens}(a).  Let the simple
closed curve encircling this arc be $0/1 = 0$.  Note that, in
choosing $A$, there is a $\ZZ$--worth of choices up to isotopy; by
Lemma \ref{lemma:integer-twist}, this ambiguity turns out to be
immaterial.

\begin{figure}
\begin{center}
(a) \input{figures/gen-v2.pstex_t} \hspace{.5in}
(b) \includegraphics{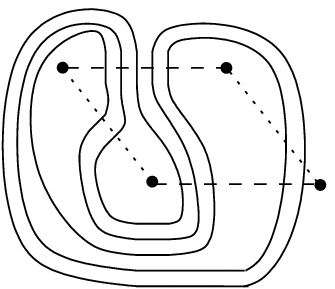}
\end{center}
\caption{(a) A framing for the 4--punctured sphere. (b) The curve
$2/5$.}
\label{fig:gens}
\end{figure}

Given a fixed meaning for $1/0$ and $0/1$, as well as an orientation
on the $4$--punctured projection sphere $S$, every curve on $S$ is
determined by a number $p/q \in \QQ \cup \{1/0\}$, where $p$ and $q$
are relatively prime.  Concretely, this curve can be drawn by marking
$q$ ticks on the arcs corresponding to $D_1$ and $D_2$, and $p$ ticks
on the arcs $A$ and $A'$ of Figure \ref{fig:gens}(a), and then
connecting the dots, as in Figure \ref{fig:gens}(b).

\begin{define}\label{def:aug-link}
The three--component link consisting of $C_1$, $C_2$, and the curve of
slope $p/q$ will be denoted $L_{p/q}$.  Thus Figure \ref{fig:dfr-link}
depicts $L_{1/2}$ and $L_{3/5}$.  Note that for $p,q$ relatively prime and $0<p<q$, $L_{p/q}$ is 
the augmentation of a $(p,q)$ \dfr knot. The $(p,q)$ \dfr knot with $n_1$ full twists in one generalized twist region and $n_2$ full twists in the other generalized twist region can be recovered from $L_{p/q}$ by performing $1/n_i$ Dehn filling on $C_i$.
\end{define}

\begin{lemma}
The link $L_{p/q}$ is isotopic to $L_{k+(p/q)}$, by an isotopy that
preserves the projection plane.
\label{lemma:integer-twist}
\end{lemma}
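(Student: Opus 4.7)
My plan is to build an ambient isotopy of $S^3$ that preserves the projection sphere $\overline{S}$ setwise and fixes the crossing circles $C_1, C_2$, and that carries $L_{p/q}$ to $L_{p/q+1}$; iteration then handles all integers $k$. The key step is to realize the Dehn twist of $\overline{S}$ along $\gamma_{1/0}$ as such an ambient isotopy, because this Dehn twist is precisely the move that shifts the slope of the third component of $L_{p/q}$ by $1$, as can be verified directly from the tick prescription in Figure \ref{fig:gens}.

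To perform the realization, I first build a $2$--sphere $\Sigma \subset S^3$ with $\Sigma \cap \overline{S} = \gamma_{1/0}$ and $\Sigma \cap (C_1 \cup C_2) = \emptyset$. Since $\gamma_{1/0}$ separates the punctures of $C_1$ from those of $C_2$ on $\overline{S}$, I take disks $\Delta^{\pm}$ bounded by $\gamma_{1/0}$ in the two halfspaces of $S^3 \setminus \overline{S}$, each arranged to separate the arc of $C_1$ from the arc of $C_2$ on its respective side. Then $\Sigma = \Delta^+ \cup \Delta^-$ bounds a $3$--ball $B \ni C_1$ in $S^3$, with $(S^3 \setminus B) \ni C_2$.

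Next, I define the candidate diffeomorphism $\phi \co S^3 \to S^3$ as a Dehn twist along $\Sigma$, supported in a bicollar $N(\Sigma) \cong \Sigma \times [-1,1]$ chosen to avoid $C_1 \cup C_2$ and to satisfy $\overline{S} \cap N(\Sigma) = \gamma_{1/0} \times [-1,1]$. Choosing the generating rotation of $\Sigma \cong S^2$ about the axis perpendicular to $\gamma_{1/0}$ (viewing $\gamma_{1/0}$ as the equator) preserves the equator setwise. Consequently, $\phi$ preserves $\overline{S}$ setwise, fixes $C_1 \cup C_2$ pointwise, and restricts on $\overline{S}$ to a Dehn twist along $\gamma_{1/0}$. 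By Cerf's theorem $\phi$ is ambient isotopic to the identity in $S^3$, and the isotopy can be arranged to preserve $\overline{S}$ and fix $C_1, C_2$ throughout, using that $\Sigma$ bounds a ball on each side and each such ball contains a single crossing circle as an unknotted sub-arc.

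The main obstacle is that $\phi$ is not \emph{locally} isotopic to the identity within $N(\Sigma)$ (the Dehn twist along an $S^2$ generates $\pi_1(\mathrm{Diff}^+(S^2)) = \ZZ/2$), so the isotopy must exploit the global topology of $S^3$. Ensuring that the resulting global isotopy simultaneously preserves $\overline{S}$ and fixes $C_1, C_2$ is the most delicate step of the argument, and it succeeds because the two sides of $\Sigma$ are unknotted balls each containing a single crossing circle.
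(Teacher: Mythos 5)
Your overall strategy---realizing the slope change by an ambient move supported near a sphere $\Sigma$ that meets the projection sphere in $\gamma_{1/0}$ and separates $C_1$ from $C_2$---is the same as the paper's, but two steps are genuinely wrong. First, a sphere twist supported in a bicollar $\Sigma\times[-1,1]$ is necessarily built from a \emph{loop} of rotations (the map must return to the identity at the ends of the bicollar), so it restricts on $\overline{S}$ to a \emph{full} Dehn twist along $\gamma_{1/0}$; and a full Dehn twist along $\gamma_{1/0}$ shifts the slope by $2$, not by $1$. Indeed, $i(\gamma_{1/0},\gamma_{p/q})=2q$, so $i\bigl(T_{\gamma_{1/0}}(\gamma_{p/q}),\gamma_{p/q}\bigr)=(2q)^2$, which forces $T_{\gamma_{1/0}}(\gamma_{p/q})=\gamma_{(p\pm 2q)/q}$. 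The move that shifts the slope by $1$ is the \emph{half} Dehn twist, which swaps the two punctures of $C_1$ (this is why the paper's proof speaks of $k$ half--Dehn twists). A half twist is not the identity on $\gamma_{1/0}$ and therefore cannot be supported in a bicollar of $\Sigma$ disjoint from $C_1$; as written, your construction only reaches $L_{(p/q)+2k}$ and misses the odd integer shifts.

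Second, your claim that the ambient isotopy from $\phi$ to the identity can be arranged to preserve $\overline{S}$ \emph{and} fix $C_1\cup C_2$ throughout is impossible: restricting such an isotopy to $\overline{S}$ would give a path of diffeomorphisms of the sphere preserving the four punctures, from $T_{\gamma_{1/0}}$ to the identity, i.e.\ it would show that $T_{\gamma_{1/0}}$ is trivial in the mapping class group of the four--punctured sphere, whereas that Dehn twist has infinite order there. Any isotopy realizing the slope change must move the punctures $C_i\cap\overline{S}$ at intermediate times, returning $C_1\cup C_2$ to itself only at time $1$. Both problems disappear with the paper's direct construction: rotate the ball bounded by $\Sigma$ containing $C_1$ through the angle $k\pi$ about the axis perpendicular to the projection plane through the center of $C_1$, damping the rotation to the identity in a collar of $\Sigma$. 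This path of diffeomorphisms starts at the identity, preserves the projection plane at every time, realizes exactly $k$ half--twists on $S$ (hence carries slope $p/q$ to $k+(p/q)$), carries $C_1$ back to itself at time $1$, and requires no appeal to Cerf's theorem or to Hatcher--type results on diffeomorphism groups.
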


\begin{proof}
In the projection plane, the curves of slope $p/q$ and $k+(p/q)$ are
related by performing $k$ half--Dehn twists about the closed curve of
slope $1/0$.  Note that this curve of slope $1/0$ is the intersection
between the projection plane and a $2$--sphere $\Sigma$ that separates
$D_1$ from $D_2$.  Thus, because $\Sigma$ is disjoint from $C_1$ and
$C_2$, the Dehn twists about its equator can be realized by an isotopy
in $S^3$ that preserves the projection plane and carries $L_{p/q}$ to
$L_{k+(p/q)}$.
\end{proof}

Thus we may assume $0<p<q$, provided $p/q \notin \{ 0, \infty \}$.
The cases in which $p/q = 0$ or $\infty$ do not lead to hyperbolic
links, and so we will assume $q\geq 2$.

\begin{lemma}\label{lemma:aug-symmetries}
The augmentations of double coil knots have the following symmetries:
\begin{enumerate}
\item[(a)] $S^3 \setminus L_{p/q}$ admits an orientation--reversing
involution, namely reflection in the projection plane.

\item[(b)] $S^3 \setminus L_{p/q}$ admits an orientation--preserving
involution interchanging $C_1$ and $C_2$.

\item[(c)] $S^3 \setminus L_{p/q}$ is homeomorphic to $S^3 \setminus
L_{-p/q}$.
\end{enumerate}
\end{lemma}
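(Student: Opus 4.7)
The plan is to exhibit, in each case, an explicit involution of $S^3$ that realizes the claimed symmetry of $L_{p/q}$, by exploiting the symmetries of the 4-punctured sphere $S$ inside the projection sphere.

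For (a), the projection sphere is by construction a sphere of symmetry for $L_{p/q}$: the $p/q$-component lies on it, while $C_1, C_2$ bound the disks $D_1, D_2$ meeting it orthogonally. Reflection through the projection sphere is therefore an orientation-reversing involution of $S^3$ that fixes the $p/q$-component pointwise and preserves each $C_i$ setwise, and hence restricts to the required involution of the complement.

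For (b), arrange the diagram so that $L_{p/q}$ admits a $\pi$-rotation $\rho$ of $S^3$ whose axis $\alpha$ lies in the projection sphere and which interchanges $D_1 \leftrightarrow D_2$. Restricted to the projection sphere, $\rho$ is an orientation-preserving involution swapping the pair of punctures of $C_1$ with those of $C_2$. The crux is to verify that $\rho$ preserves the framing up to isotopy on $S$: the $1/0$ curve, being the unique essential separating curve between the two pairs of punctures, is manifestly invariant, while the $0/1$ curve (encircling $A$) is sent to the curve encircling $A'$, which represents the same isotopy class in $S$. Since $\rho$ acts orientation-preservingly on $S$ and fixes the framing basis, it preserves every simple closed curve of slope $p/q$ up to isotopy, and in particular the $p/q$-component of $L_{p/q}$. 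Thus $\rho$ restricts to an orientation-preserving involution of $S^3 \setminus L_{p/q}$ swapping $C_1$ and $C_2$.

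For (c), let $\tau$ be reflection of $S^3$ through the plane $\Pi$ perpendicular to the projection sphere that contains both disks $D_1$ and $D_2$ (equivalently, $\Pi$ intersects the projection sphere in the great circle formed by the arcs $A$, $D_1 \cap S$, $A'$, $D_2 \cap S$ in succession). Then $\tau$ preserves each $C_i$ setwise and acts on the projection sphere as an orientation-reversing reflection that fixes both framing curves $1/0$ and $0/1$ up to isotopy. Because reversing the orientation of $S$ while fixing the framing basis negates every slope, $\tau$ sends the $p/q$-component to the $-p/q$-component, so $\tau(L_{p/q}) = L_{-p/q}$, giving the homeomorphism $S^3 \setminus L_{p/q} \cong S^3 \setminus L_{-p/q}$. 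The main obstacle throughout is the slope bookkeeping --- confirming that each chosen involution acts on the framing $\{1/0, 0/1\}$ in the way claimed --- but once this verification is in hand the conclusions are immediate.
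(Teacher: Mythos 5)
Parts (a) and (c) are fine and essentially match the paper: (a) is the visible reflection in the projection plane, and (c) follows because an orientation--reversing symmetry fixing the framing curves $1/0$ and $0/1$ negates slopes. (Your reflection through the plane containing $D_1$ and $D_2$ is a legitimate variant of the paper's ``view from the other side of the projection plane.'')

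Part (b), however, has a genuine error in the choice of involution. You take a $\pi$--rotation $\rho$ whose axis \emph{lies in} the projection sphere. Any such rotation necessarily swaps the two complementary balls of the projection sphere, so its restriction to the $4$--punctured sphere $S$ is \emph{orientation--reversing} (it is a reflection of $S$ across the great circle where the axis meets it), not orientation--preserving as you assert. Since your $\rho|_S$ fixes the framing curves $1/0$ and $0/1$ up to isotopy but reverses orientation, it acts on slopes by $p/q \mapsto -p/q$ --- exactly the computation you carry out in part (c). Hence $\rho$ carries $L_{p/q}$ to $L_{-p/q}$, and for general $p/q$ these curves are not isotopic on $S$, so $\rho$ does not preserve $L_{p/q}$ and is not the desired involution; what you have built is another proof of (c) combined with the swap of $C_1$ and $C_2$. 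The correct choice (the one in the paper) is the $\pi$--rotation about an axis \emph{perpendicular} to the projection sphere, passing through the center of the punctured parallelogram and the point at infinity. This rotation preserves each side of the projection sphere, restricts to $S$ as the orientation--preserving hyperelliptic involution exchanging the punctures of $C_1$ with those of $C_2$, and that involution preserves every isotopy class of simple closed curve on $S$ (in particular the curve of slope $p/q$); your Farey--graph argument then applies verbatim to finish the proof.
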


\begin{proof}
The involution in (a) is immediately visible in Figure
\ref{fig:dfr-link}.  The involution in (b) is a $\pi$--rotation about
an axis perpendicular to $S$.  Within $S$, the involution is a
$\pi$--rotation about two points (in Figure \ref{fig:gens}, the center
of the parallelogram and the point at infinity), which sends the curve
of slope $p/q$ to an isotopic curve.  Finally, statement (c) is
immediate because $L_{p/q}$ becomes $L_{-p/q}$ when viewed from the
other side of the projection plane.
\end{proof}

\subsection{2--bridge links and augmented 2--bridge links}

The links $L_{p/q}$ are related in a fundamental way to 2--bridge
links.  In order to show this relationship, we present the following
method for constructing links.

Let $S$ denote the 4--punctured sphere.  Consider $S \times [0,1]$
embedded in $S^3$, with the framing on $S = S\times \{t\}$ as above,
for all $t\in [0,1]$.

Recall that we may obtain (the complement of) any 2--bridge link by
attaching two 2--handles to $S\times [0,1]$, one along the slope $1/0$
on $S\times \{1\}$, and one along a slope $p/q$ on $S\times \{0\}$.
Since Dehn twisting along $1/0$ gives a homeomorphic link, we may
assume $p/q \in \QQ/\ZZ$.  The continued fraction expansion of $p/q$
now describes an alternating diagram of the 2--bridge link.  See
\cite[Proposition 12.13]{burde-zieschang}.  One example is depicted in
Figure \ref{fig:2-bridge}(a).

We modify this construction slightly.  Attach a 2--handle to $S \times
\{1\}$ along the slope $1/0$ as before.  However, on $S\times \{0\}$,
chisel out the slope $p/q$.  This separates $S\times \{0\}$ into two
2--punctured disks.  Glue one 2--punctured disk to the other, gluing
the boundary corresponding to the slope $p/q$ to itself, and gluing
the other boundary components in pairs.  We call this link the
\emph{clasped 2--bridge link of slope $p/q$}.  See Figure
\ref{fig:2-bridge}(b) for an example.

(Note that up to homeomorphism, there are two ways to glue the
2--punctured disks so that the boundary $p/q$ is glued to itself.
Either way is acceptable and leads to the same results below: any
extra crossing cancels with its mirror image in Proposition
\ref{thm:2bridge}.)

\begin{figure}[h]
\input{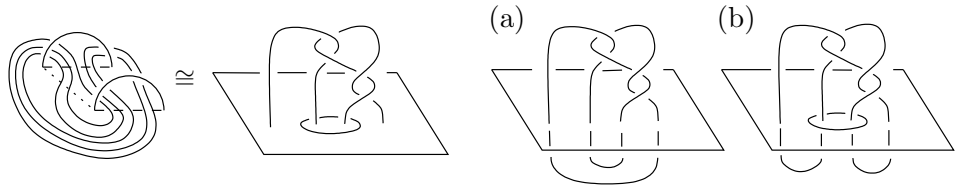}
\caption{(a).  Constructing the 2--bridge knot of slope $2/5$.  (b).
	Constructing the clasped 2--bridge link of slope $2/5$.}
\label{fig:2-bridge}
\end{figure}

\begin{remark}\label{rem:aug-2bridge}
Note that the clasped 2--bridge link of slope $p/q$ has a diagram
similar to the diagram of the regular 2--bridge link of slope $p/q$,
as in Figure \ref{fig:2-bridge}.  In particular, the diagrams will be
identical ``above'' the embedded surface $S \times \{0\}$, and here we
take both diagrams to agree with the standard alternating diagram of
the 2--bridge link.  On $S \times \{0\}$, the clasped 2--bridge link
will have an extra link component, the \emph{clasp} component, which
bounds two embedded 2--punctured disks in $S \times \{0\}$.  Below $S
\times \{0\}$, both diagrams consist of two simple arcs, but they are
attached to differing punctures of $S \times \{0\}$ for the 2--bridge
link and for the clasped 2--bridge link.  Compare the examples in
Figure \ref{fig:2-bridge}(a) and (b).

Note also that by performing $\pm 1/N$ Dehn filling about the clasp
component, we replace the clasp and the two strands it encircles by
$N$ full twists of those two strands (in other words, a twist region
with $2N$ crossings).  By choosing the sign of the Dehn filling
appropriately, we can ensure that the result is the alternating
diagram of a 2--bridge link of some new slope.  Thus the clasped
2--bridge link of slope $p/q$ can be viewed as an augmented 2--bridge
link of some other slope, where we are using the term \emph{augmented}
in the sense of Adams \cite{adams:aug}.
\end{remark}


There is a standard way to add two manifolds containing embedded
2--punctured disks, explored by Adams \cite{adams:3-punct}.  This is
the belted sum.  We recall the definition.

\begin{figure}[h]
\input{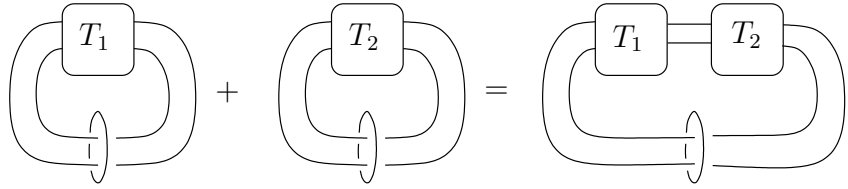}
\caption{A belted sum.}
\label{fig:beltsum}
\end{figure}

Let $M_1$ be the complement of the link in $S^3$ with the following
presentation.  $T_1$ is some tangle in a $4$--punctured sphere.  The
four punctures of this sphere are connected in a manner as shown on
the left in Figure \ref{fig:beltsum}, with a simple closed unknotted
curve $B_1$ encircling the two strands.  Note $B_1$ bounds a
2--punctured disk $S_1$ in the complement of the link.  We will call
the link component $B_1$ the \emph{belt component} of the link.  We
will call the link consisting of $T_1$ and the component $B_1$ a
\emph{belted link}.  We will only be interested in belted links
admitting hyperbolic structures.

Given two hyperbolic belted links with complements $M_1$ and $M_2$,
consisting of tangles $T_1$ and $T_2$, belt components $B_1$ and
$B_2$, and 2--punctured disks $S_1$ and $S_2$, we form the complement
of a new belted link as follows.  Cut each manifold $M_1$ and $M_2$
along $S_1$ and $S_2$, respectively.  We obtain two manifolds,
$\breve{M_1}$ and $\breve{M_2}$, each with two 2--punctured disks as
boundary.  There is a unique hyperbolic structure on a 2--punctured
disk, hence any two are isometric.  This allows us to glue
$\breve{M_1}$ to $\breve{M_2}$ by isometries of their boundaries,
gluing $B_1$ to $B_2$.  The result is the complement of a new belted
link.  See Figure \ref{fig:beltsum}.  We call this new belted link the
\emph{belted sum} of $T_1$ and $T_2$.

\begin{figure}[h]
\input{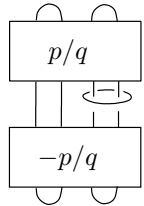}
\caption{The belted sum whose complement is homeomorphic to
$S^3 \setminus L_{p/q}$.}
\label{fig:scheme}
\end{figure}

\begin{prop}
$S^3 \setminus L_{p/q}$ is homeomorphic to the belted sum of a clasped
2--bridge link of slope $p/q$ and a clasped 2--bridge link of slope
$-p/q$. (See Figure \ref{fig:scheme}.)
\label{thm:2bridge}
\end{prop}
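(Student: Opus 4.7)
The plan is to exhibit $S^3 \setminus L_{p/q}$ as the stated belted sum by cutting along an embedded $2$-sphere in the projection plane of $L_{p/q}$ and identifying each resulting half with a cut-open clasped 2-bridge link complement. To begin, let $\Pi$ denote the projection sphere of the standard diagram of $L_{p/q}$, so the curve $\gamma$ of slope $p/q$ lies entirely on $\Pi$ while $C_1, C_2$ are perpendicular to $\Pi$, each piercing it at $2$ points. Since $\gamma$ is essential on the 4-punctured sphere $S = \Pi \setminus (C_1 \cup C_2)$, it separates the $4$ punctures into pairs, so each of the two disks $D^+, D^-$ of $\Pi \setminus \gamma$ contains exactly $2$ of them. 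Hence each $D^\pm \setminus L_{p/q}$ is a 2-punctured disk in $M := S^3 \setminus L_{p/q}$ with outer boundary on $\gamma$.

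Next, I isotope $L_{p/q}$ into a symmetric $S \times [-1, 1]$ standard position: since $C_1, C_2$ are unknotted circles perpendicular to $\Pi$, they can be isotoped so $C_1$ sits just above $\Pi$ (at height $t = 1$) and $C_2$ just below (at $t = -1$), with $\gamma$ remaining on $\Pi = S \times \{0\}$. In this position $M$ has the structure of $S \times [-1, 1]$ with 2-handles attached at $t = \pm 1$ along slope $1/0$ (producing $C_1, C_2$), plus $\gamma$ on the middle slice. Cutting $M$ along $D^+ \cup D^-$ separates it into an upper half $M^+$ and a lower half $M^-$, each carrying the structure of $S \times [0, 1]$ with a 2-handle at slope $1/0$ on one end and two 2-punctured disks on the other.

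The key identification is that $M^+$ is homeomorphic to the complement of a clasped 2-bridge link of slope $p/q$, cut open along the 2-punctured disk bounded by the clasp. By the construction in Section~\ref{sec:augmented}, this complement is built from $S \times [0, 1]$ by the same pair of operations---a 2-handle on top along slope $1/0$ and the clasp gluing on the bottom along slope $p/q$---and cutting along the clasp's 2-punctured disk exactly reverses the clasp gluing, yielding the structure of $M^+$. The lower half $M^-$ admits a parallel identification with the clasped 2-bridge link of slope $-p/q$; the slope reversal occurs because the outward normal to $\Pi$ points oppositely on the two sides, reversing the orientation of $S$ and thus negating the slope of $\gamma$, precisely the symmetry recorded in Lemma~\ref{lemma:aug-symmetries}(c). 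Regluing $M^+$ and $M^-$ along $D^+ \cup D^-$ recovers $M$, and this regluing is by definition the belted sum of the two clasped 2-bridge link complements.

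The main obstacle lies in the matching in the third step: it requires carefully comparing the combinatorial data of the two constructions on the common scaffolding $S \times [0, 1]$, verifying that the top 2-handle attachment corresponds to the crossing circle $C_i$ and that the chiseled $p/q$-slope bottom matches the pair of 2-punctured disks $D^\pm$. This amounts to a direct, if detailed, comparison of definitions.
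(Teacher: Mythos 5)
Your proposal follows essentially the same route as the paper: slice $S^3 \setminus L_{p/q}$ along the projection sphere, recognize each half as $S \times [0,1]$ with a 2--handle attached along $1/0$ at one end and the slope $\pm p/q$ chiseled out at the other (i.e., a clasped 2--bridge link complement cut open along the clasp's 2--punctured disk), and note that the regluing is by definition a belted sum, with the sign flip coming from the orientation reversal across the projection plane. One sentence in your second step is false, though: $C_1$ and $C_2$ cannot be isotoped to lie entirely above and below $\Pi$ while $\gamma$ stays on $\Pi$, since each $C_i$ encircles $q \geq 2$ strands of $\gamma$ and hence is not split from it; if this were literally possible, one half would contain an entire crossing--circle cusp and the other none, breaking the symmetry you need. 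Fortunately nothing downstream depends on it: the correct picture, which your subsequent structural description already encodes, is that each $C_i$ meets $\Pi$ in two points and is cut by $\Pi$ into an upper and a lower arc, and the 2--handle attached along $1/0$ at $S \times \{\pm 1\}$ caps off the half--arcs of \emph{both} circles on that side. With that sentence deleted, the argument is correct and matches the paper's proof.
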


\begin{proof}
Slice $S^3 \setminus L_{p/q}$ along the projection plane.  This cuts
the manifold into two pieces, which we call the top half and bottom
half, each bounded by a 4--punctured sphere.  Let $S$ be a
4--punctured sphere.  Note we can embed $S\times[0,1]$ in the top half
in $S^3$ such that $S\times \{0\}$ is embedded on the projection
plane, and the punctures of $S\times \{t\}$ correspond to points on
the crossing circles $C_1$ and $C_2$.

Alternately, give $S$ the same framing as above, so that $1/0$
corresponds to the curve encircling $D_1$ (or $D_2$), and attach a
2--handle to $S\times \{1\}$ along the slope $1/0$.  By our choice of
framing, the result is homeomorphic to capping off the halves of arcs
$C_1$ and $C_2$.  When we chisel out the slope $p/q$ on the projection
plane $S\times \{0\}$, the result is a manifold with boundary
consisting of two 2--punctured disks.  This is homeomorphic to the top
half of $S^3\setminus L_{p/q}$, as in the left of Figure
\ref{fig:2-bridge}.  Thus the top half of $S^3 \setminus L_{p/q}$
sliced along the projection plane is homeomorphic to a clasped
2--bridge link of slope $p/q$ sliced open along the 2--punctured disk
bounded by the clasp component.

Similarly, when we consider the bottom half of $S^3 \setminus L_{p/q}$
sliced along the projection plane, we see it is homeomorphic to the
clasped 2--bridge link of slope $-p/q$, sliced open along the
2--punctured disk bounded by the clasp component.

Since we glue the 2--punctured disks of the top half to those of the
bottom half such that the chiseled--out curve $p/q$ is glued to
itself, this is, by definition, a belted sum of the two manifolds.
\end{proof}

\subsection{Volume bounds for the parent links}

Recall that every rational number $p/q \in \QQ$ can be expressed as a
finite length continued fraction.  When $q>p>0$ and all the terms of
the continued fraction are positive, this expression is unique.  We
define the \emph{length} of the continued fraction to be the number of
denominators in this unique continued fraction where all denominators
are positive.

\begin{theorem}
Let $k$ be the length of the continued fraction expansion of $p/q$,
with $0<p<q$ and $q\geq 2$.  Then $L_{p/q}$ is hyperbolic, and
$$4k v_3 - 1.3536\leq \vol(S^3 \setminus L_{p/q}) \leq 4k v_8,$$ where
$v_3 = 1.0149...$ is the volume of a regular ideal tetrahedron and
$v_8 = 3.6638...$ is the volume of a regular ideal octahedron in
$\HH^3$.
\label{thm:parent}
\end{theorem}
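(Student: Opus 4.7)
The plan is to reduce the volume bounds for $L_{p/q}$ to bounds on each half of the belted-sum decomposition of Proposition \ref{thm:2bridge}. By Lemma \ref{lemma:aug-symmetries}(c), both halves are homeomorphic copies of the clasped 2-bridge link of slope $p/q$, which I will call $L'$. The belted sum is performed along two 2-punctured disks, which are properly embedded thrice-punctured spheres in the link exteriors; by Adams' theorem \cite{adams:3-punct}, such surfaces are isotopic to totally geodesic ones in any complete hyperbolic metric, so the hyperbolic structures on the two halves glue compatibly to produce the complete structure on $S^3 \setminus L_{p/q}$, with additive volume
\[
\vol(S^3 \setminus L_{p/q}) \: = \: 2\,\vol(S^3 \setminus L').
\]
Hyperbolicity of $L'$, and hence of $L_{p/q}$, follows from Menasco's theorem applied to the prime, reduced alternating diagram of $L'$ already visible in Figure \ref{fig:2-bridge}(b).

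For the upper bound, I would use Remark \ref{rem:aug-2bridge} to identify $L'$ with a Dehn filling of the \emph{fully augmented} 2-bridge link of slope $p/q$, in which each of the $k$ twist regions of the standard alternating 2-bridge diagram has been augmented by its own crossing circle. The complement of this fully augmented alternating link admits an ideal polyhedral decomposition into two combinatorial polyhedra (reflections of each other across the projection plane), each of which can be built from at most $k$ ideal octahedra, one per crossing circle. This yields a total volume of at most $2 v_8 k$, and Thurston's strict monotonicity of volume under hyperbolic Dehn filling then gives $\vol(S^3 \setminus L') \leq 2 v_8 k$. Doubling produces the stated upper bound $4 v_8 k$.

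For the lower bound, I would apply Lackenby's estimate $\vol(L) \geq 2 v_3 (t(L)-2)$ for prime alternating links \cite{lackenby:volume} to a prime alternating diagram of $L_{p/q}$, whose twist number is comparable to $2k$ (each side of the projection plane contributes $k$ twist regions from the continued fraction expansion of $p/q$, plus a bounded correction from $C_1, C_2$). Combined with the sharper guts-style refinements of \cite{fkp:volume, fkp:tangles}, which supply additional volume per twist region when each twist region contains many crossings, this should yield the bound $4v_3 k - 1.3536$. The principal obstacle is precisely this final constant: the raw Lackenby estimate gives a weaker additive loss of order $-4v_3$, so an additional ingredient, namely the guts-based inequalities of \cite{fkp:tangles} together with a careful twist-region count on each half of the belted sum, is needed to tighten the constant to $1.3536$. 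A secondary technical point is verifying that the polyhedral decomposition of the fully augmented 2-bridge link really assigns at most $v_8$ per crossing circle per side; here the symmetries of Lemma \ref{lemma:aug-symmetries} make the decomposition natural and reduce the verification to a single side.
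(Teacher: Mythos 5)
Your structural skeleton is the same as the paper's: decompose $S^3\setminus L_{p/q}$ as the belted sum of two clasped $2$--bridge links (Proposition \ref{thm:2bridge}), use Adams' result that thrice--punctured spheres are totally geodesic so volumes add, and note the two halves are mirror images and hence have equal volume. But the quantitative core of your argument has a genuine gap, which you yourself flag: you cannot reach the lower bound $4kv_3 - 1.3536$. Lackenby's alternating bound does not apply --- $L_{p/q}$ is not an alternating link (the crossing circles $C_1,C_2$ sit perpendicular to the projection plane), and even granting an alternating surrogate with $\approx 2k$ twist regions, that bound loses an additive constant on the order of $2v_3\cdot 2 \approx 4.06$ rather than $1.35$. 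The ``guts--style refinements'' of \cite{fkp:volume, fkp:tangles} do not rescue this: their hypotheses (e.g.\ many crossings per twist region, or alternating tangle sums) fail for the parent link, whose twist regions may contain a single crossing. The missing ingredient is the Futer--Gu\'eritaud two--sided volume bound for $2$--bridge links \cite[Theorem B.3]{gf:two-bridge}: a $2$--bridge link whose alternating diagram has $t$ twist regions has volume between $2tv_3 - 2.7066$ and $2(t-1)v_8$. One applies this not to the clasped link itself but to the genuine $2$--bridge links $K_N$ obtained by $\pm 1/N$ filling of the clasp component (Remark \ref{rem:aug-2bridge}); these have $k+1$ twist regions, and the clasped link is their geometric limit, so it inherits the bounds $2(k+1)v_3 - 2.7066 \leq \vol \leq 2kv_8$. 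Doubling under the belted sum gives exactly $4kv_3 - 1.3536$ and $4kv_8$. This same limit argument also supplies hyperbolicity, which is your second gap: Menasco's theorem requires a prime alternating diagram, and the clasped $2$--bridge link is not presented by one (it is an \emph{augmented} alternating link, which is why the paper's Remark \ref{rem:aug-2bridge} cites Adams \cite{adams:aug}); the paper instead deduces hyperbolicity from the hyperbolicity of the $K_N$ and of their geometric limit.

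Your upper bound via a fully augmented $2$--bridge link and an octahedral decomposition is a legitimately different route in spirit (it is essentially the Agol--D.~Thurston argument), but as written it is an assertion rather than a proof: the count ``one ideal octahedron per crossing circle per side'' needs justification, and you must also track the extra clasp component when augmenting. Since the Futer--Gu\'eritaud theorem already delivers the matching upper bound $2kv_8$ per half with no extra work, I would fold both bounds into that single citation rather than running two separate machines.
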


\begin{proof}
By Proposition \ref{thm:2bridge}, $S^3 \setminus L_{p/q}$ is
homeomorphic to the belted sum of two clasped 2--bridge links, one of
slope $p/q$ and one of slope $-p/q$.  By Remark \ref{rem:aug-2bridge},
a portion of the diagram of each clasped 2--bridge link (essentially,
everything away from the clasp) agrees with the alternating diagram of
the regular 2--bridge link of slope $p/q$ or $-p/q$ respectively.  It
is well known that these 2--bridge links will have exactly $k$ twist
regions (see, for example Burde and Zieschang \cite[Proposition
12.13]{burde-zieschang}).  Thus the clasped 2--bridge links will
contain $k$ twist regions as well as a separate clasp component.  By
our restrictions on $q$ and $p$, $k \geq 1$.

Now, again by Remark \ref{rem:aug-2bridge},
the clasped 2--bridge link can be Dehn filled, along slope $\pm 1/N$
on the clasp component, to give a diagram of a new alternating
2--bridge link $K_N$.  The link $K_N$ will have $k+1$ twist regions,
with $2N$ crossings in the new $(k+1)$-st twist region.  As $N$
approaches infinity, the limit in the geometric topology of the $K_N$
will be the original clasped 2--bridge link \cite{thurston:notes}.
Because $k+1 \geq 2$, each link $K_N$ is hyperbolic (see
e.g. \cite[Theorem A.1]{gf:two-bridge}).  Thus its geometric limit is
also hyperbolic.  Finally, the belted sum of hyperbolic manifolds is
hyperbolic.  So $L_{p/q}$ is hyperbolic.

Futer and Gu\'eritaud have found bounds on the volumes of 2--bridge
knots.  By \cite[Theorem B.3]{gf:two-bridge}, the complement of a
2--bridge knot whose standard alternating diagram has $k+1$ twist
regions has volume at least $2(k+1)v_3 -2.7066$ and at most $2kv_8$.
Since the clasped 2--bridge link of slope $\pm p/q$ is the geometric
limit of such manifolds, it satisfies the same volume bounds.  Adams
observed that the volume of a belted sum of two hyperbolic manifolds
is equal to the sum of the volumes of the two pieces
\cite{adams:3-punct}.  Thus the volume of $S^3 \setminus L_{p/q}$ is
at least $4k v_3 -1.3536$ and at most $4v_8k$.
\end{proof}

\subsection{Volume bounds for \dfr knots}

Let $K$ be a $(p,q)$ \dfr knot. Then, by Definition \ref{def:aug-link}, $K$ is
obtained by $1/n_i$ filling on the component $C_i$
of $L_{p/q}$, $i=1, 2$.   We may bound the
volume of $K$ by bounding the change in  volume under Dehn filling.  Our main
tool is the following recent result of the authors, Theorem 1.1 of
\cite{fkp:volume}.


\begin{theorem}[\cite{fkp:volume}]
	Let $M$ be a complete, finite--volume hyperbolic manifold with
	cusps.  Suppose $C_1, \dots, C_k$ are disjoint horoball
	neighborhoods of some subset of the cusps.  Let $s_1, \dots, s_k$ be
	slopes on $\partial C_1, \dots, \partial C_k$, each with length
	greater than $2\pi$.  Denote the minimal slope length by $\lmin$.
	Then the manifold $M(s_1, \dots, s_k)$, obtained by Dehn filling $M$
	along $s_1, \dots, s_k$, is hyperbolic, and
	$$ \vol(M(s_1, \dots, s_k)) \: \geq \:
	\left(1-\left(\frac{2\pi}{\lmin}\right)^2\right)^{3/2} \vol(M).$$
\label{thm:fkp-dehn-filling}
\end{theorem}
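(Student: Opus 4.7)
The plan is to combine the $2\pi$ theorem (which supplies a quantitative tube-radius estimate around the geodesic cores of the filled solid tori) with a drilling theorem for hyperbolic $3$--manifolds (which compares the volume of a hyperbolic manifold containing a short geodesic to the volume of its geodesic drilling).

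First I would establish hyperbolicity of the filled manifold together with a quantitative tube-radius estimate. By the $2\pi$ theorem of Gromov--Thurston, in the explicit form due to Bleiler--Hodgson, the hypothesis $\lmin > 2\pi$ produces a Riemannian metric of strictly negative sectional curvature on $M(s_1,\dots,s_k)$; the filled manifold is therefore irreducible and atoroidal, so Thurston's hyperbolization theorem for Haken manifolds (or Perelman's geometrization) upgrades this to a hyperbolic structure. In the hyperbolic metric, each filling curve becomes a closed geodesic $\gamma_i$ with an embedded tubular neighborhood, and careful bookkeeping inside the $2\pi$--theorem construction yields the explicit tube radius bound
$$\tanh^2 R_i \;\geq\; 1 - \left(\frac{2\pi}{|s_i|}\right)^{\!2},$$
where $R_i$ is the radius of the embedded tube around $\gamma_i$ and $|s_i|$ is the length of $s_i$ measured on $\partial C_i$.

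Next I would apply a drilling theorem for hyperbolic $3$--manifolds, due to Agol (with complementary results of Brock--Bromberg), to compare $\vol(M(s_1,\dots,s_k))$ with the volume of the complete hyperbolic metric on the drilled manifold $M$. In the quantitative form relevant here, when each $\gamma_i$ carries an embedded tube of radius at least $R$, one obtains
$$\vol(M) \;\leq\; \coth^{3}(R)\cdot \vol\!\left(M(s_1,\dots,s_k)\right),$$
equivalently $\vol(M(s_1,\dots,s_k)) \geq \tanh^{3}(R)\,\vol(M)$. Inserting $R = R_{\min}$, the smallest of the tube radii, together with the bound from the previous paragraph gives
$$\vol\!\left(M(s_1,\dots,s_k)\right) \;\geq\; \tanh^{3}(R_{\min})\cdot \vol(M) \;\geq\; \left(1 - (2\pi/\lmin)^2\right)^{3/2}\vol(M),$$
completing the proof.

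The main obstacle is the sharp form of the drilling inequality in the second step: this is the genuinely deep analytic input. Deriving the precise factor $\coth^3(R)$ requires either Perelman's monotonicity formulas for Ricci flow with surgery, or a careful integration of the Schl\"afli formula along a Hodgson--Kerckhoff cone deformation interpolating from the cusped manifold $M$ (cone angle $0$) to the filled manifold $M(s_1,\dots,s_k)$ (cone angle $2\pi$), while simultaneously controlling the length of the singular locus and the embedded tube radius throughout the deformation. A secondary technical point is tightening the $2\pi$--theorem bookkeeping in the first step to produce the sharp constant $\tanh^2 R_i \geq 1-(2\pi/|s_i|)^2$ rather than a weaker inequality.
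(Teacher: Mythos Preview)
This theorem is not proved in the present paper; it is quoted from \cite{fkp:volume} and used as a black box. The argument in that reference is quite different from what you outline. There, one constructs an explicit Riemannian metric $g$ on $M(s_1,\dots,s_k)$ by replacing each horoball $C_i$ with a solid torus carrying a carefully chosen warped--product metric (a refinement of the Gromov--Thurston $2\pi$ construction) so that all sectional curvatures of $g$ are at most $-1$. One then computes directly that $\vol(M(s_1,\dots,s_k),g) \geq (1-(2\pi/\lmin)^2)^{3/2}\vol(M)$, and finishes by invoking the Besson--Courtois--Gallot inequality in the finite--volume form due to Boland--Connell--Souto: the hyperbolic metric on $M(s_1,\dots,s_k)$ has volume at least that of any metric with sectional curvature $\leq -1$. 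No tube radii, no drilling theorems, and no cone deformations appear.

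Your proposed route has a genuine gap at both steps. First, the $2\pi$ theorem produces a negatively curved (not hyperbolic) metric on the filled solid tori; it says nothing about the embedded tube radius of the core geodesic in the \emph{hyperbolic} metric on $M(s_1,\dots,s_k)$, which is obtained only a posteriori via geometrization. The inequality $\tanh^2 R_i \geq 1-(2\pi/|s_i|)^2$ is not a known consequence of the $2\pi$ construction, and the quantitative tube--radius estimates that do exist (Hodgson--Kerckhoff) require a normalized slope length threshold considerably larger than $2\pi$ and have a different functional form. Second, the drilling inequality $\vol(M) \leq \coth^3(R)\,\vol(M(s_1,\dots,s_k))$ is not a theorem in the literature in this sharp form; neither Agol's drilling results nor the Schl\"afli/cone--deformation analysis yield exactly this factor. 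You have, in effect, reverse--engineered two lemmas whose product is the desired bound, but each lemma would itself require a proof at least as delicate as the direct argument in \cite{fkp:volume}.
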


We also need the following additional notation.  Let $k$, $n_1$, $n_2$ be integers.  Define
\begin{equation}\label{eq:ell}
 n := \min \left\{ \abs{n_1}, \abs{n_2} \right\} \quad \mbox{and}
	\quad \ell := \max \left\{\frac{1}{4}+4n^2, \: \frac{32\sqrt 2 \,
	k^2 n^2}{7203} \right\}.
\end{equation}
Note that the right--hand term of the maximum becomes greater when $k
\geq 26$.  We may now give volume bounds on \dfr knots.

\begin{theorem}
Let $K$ be a $(p,q)$ \dfr knot, where one generalized twist region contains $n_1$ positive full twists, 
and the other region contains $n_2$ twists.
Let $k$ denote the length of the
continued fraction expansion of $p/q$, and let $\ell$ be as in (\ref{eq:ell}) above.
Suppose that at least one of the following holds:
\begin{enumerate}
	\item $\abs{n_i} \geq 4$ for $i=1, 2$.
	\item $k \, \abs{n_i} \geq 80$ for $i=1, 2$.
\end{enumerate}
Then $K$ is hyperbolic with volume
$$\left(1 - \frac{4\pi^2}{\ell} \right)^{3/2}(4k v_3 - 1.3536)\leq
\vol(S^3 \setminus K) < 4v_8k,$$
where $v_3 = 1.0149...$ is the volume of a regular ideal tetrahedron
and $v_8 = 3.6638...$ is the volume of a regular ideal octahedron in
$\HH^3$.
\label{thm:fkp-volbound}
\end{theorem}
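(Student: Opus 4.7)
The overall strategy is to realize $K$ as a Dehn filling of the parent link $L_{p/q}$ studied earlier in this section and then apply the Dehn filling volume estimate, Theorem~\ref{thm:fkp-dehn-filling}. By Definition~\ref{def:aug-link}, the complement $S^3 \setminus K$ is obtained from $S^3 \setminus L_{p/q}$ by performing $1/n_1$ surgery on $C_1$ and $1/n_2$ surgery on $C_2$. Thurston's theorem that hyperbolic Dehn filling strictly decreases volume, combined with the upper estimate $\vol(S^3 \setminus L_{p/q}) \leq 4v_8 k$ from Theorem~\ref{thm:parent}, immediately gives the desired upper bound $\vol(S^3 \setminus K) < 4v_8 k$.

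For the lower bound and for the hyperbolicity of $K$, the plan is to apply Theorem~\ref{thm:fkp-dehn-filling} with slopes $s_i = 1/n_i$. This requires choosing disjoint embedded horoball neighborhoods of the cusps of $C_1$ and $C_2$ in $L_{p/q}$ and showing that each filling slope has length at least $\sqrt{\ell} > 2\pi$ on its cusp torus. First I would exploit the symmetries from Lemma~\ref{lemma:aug-symmetries}: the orientation-preserving involution swapping $C_1$ and $C_2$ lets me choose congruent horoball neighborhoods of equal size at the two cusps, while the reflection through the projection plane preserves each $C_i$ and acts as a reflection on its cusp torus. Together these force each cusp cross section to be a Euclidean rectangle with sides parallel to the meridian $\mu_i$ and longitude $\lambda_i$ of $C_i$, so the $1/n_i$ slope has length exactly $\sqrt{\abs{\mu_i}^2 + n_i^2 \abs{\lambda_i}^2}$.

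The geometric heart of the proof is then to bound $\abs{\mu_i}$ and $\abs{\lambda_i}$ from below. Since $\lambda_i$ bounds the twice-punctured crossing disk $D_i$, which is totally geodesic thanks to the reflective symmetry through the projection plane, standard horoball packing arguments for totally geodesic thrice-punctured spheres (as used by Adams in the belted sum setting) produce disjoint horoball neighborhoods of $C_1, C_2$ with $\abs{\lambda_i} \geq 2$. Under hypothesis~(1), $\abs{n_i} \geq 4$, the disjointness constraint together with the rectangular cusp shape yields $\abs{\mu_i} \geq 1/2$, so that $\abs{\mu_i}^2 + n_i^2 \abs{\lambda_i}^2 \geq 1/4 + 4n^2$. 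Under hypothesis~(2), $k \abs{n_i} \geq 80$, I would instead combine the lower bound $\vol(S^3 \setminus L_{p/q}) \geq 4kv_3 - 1.3536$ from Theorem~\ref{thm:parent} with a B\"or\"oczky-type packing density estimate, which forces the cusp at each $C_i$ (and hence, by the fixed bound on $\abs{\lambda_i}$, the meridian length $\abs{\mu_i}$) to grow linearly in $k$; this gives $\lmin^2 \geq 32\sqrt{2}\, k^2 n^2 / 7203$.

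The main obstacle will be the second of these cusp-length estimates, since extracting the explicit constant $32\sqrt{2}/7203$ requires careful equivariant horoball packing that respects both symmetries of Lemma~\ref{lemma:aug-symmetries} and that takes full advantage of the $k$-dependence in the volume lower bound for $L_{p/q}$. Once $\lmin^2 \geq \ell$ is in hand, one verifies numerically that $\ell > 4\pi^2$ under either hypothesis (hypothesis~(1) gives $\ell \geq 64.25$, and hypothesis~(2) gives $\ell \geq 32\sqrt{2}\cdot 6400 / 7203 > 4\pi^2$). Theorem~\ref{thm:fkp-dehn-filling} then establishes that $K$ is hyperbolic and
\[
\vol(S^3 \setminus K) \;\geq\; \left(1 - \frac{4\pi^2}{\ell}\right)^{3/2} \vol(S^3 \setminus L_{p/q}) \;\geq\; \left(1 - \frac{4\pi^2}{\ell}\right)^{3/2}(4k v_3 - 1.3536),
\]
which is the claimed lower bound.
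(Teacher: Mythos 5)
Your upper bound and your treatment of hypothesis~(1) essentially match the paper: the paper cites the reflective symmetry of Lemma~\ref{lemma:aug-symmetries}(a) together with Proposition~3.5 of \cite{purcell:aug-ref-tw} to get $\ell_i \geq \sqrt{1/4+4n_i^2}$, which is exactly the estimate you reconstruct via rectangular cusp tori. The genuine gap is in hypothesis~(2). A B\"or\"oczky-type packing estimate runs in the wrong direction: it gives an \emph{upper} bound on the density of a horoball packing (hence an upper bound on cusp volume as a fraction of $\vol(M)$), not a lower bound on cusp area proportional to $\vol(S^3\setminus L_{p/q})$; and even if one had a lower bound on total cusp area growing like $k$, nothing in your argument localizes that area to the cusps of $C_1$ and $C_2$ rather than the cusp of the knot component. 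Moreover, the quantity that must grow linearly in $k$ is not the meridian length $\abs{\mu_i}$ but the height of the cusp torus transverse to the meridian (equivalently $\abs{\lambda_i}$ on your rectangular torus): the bound you need is $\lmin \geq c\,k\,\abs{n_i}$, and $\sqrt{\abs{\mu_i}^2 + n_i^2\abs{\lambda_i}^2}$ with $\abs{\mu_i}\gtrsim k$ and $\abs{\lambda_i}\geq 2$ only gives $\max(ck, 2\abs{n_i})$, which is far short of the product $k\abs{n_i}$ required to recover $\ell \geq 32\sqrt{2}\,k^2n^2/7203$.

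The paper's actual mechanism for case~(2) is not a packing argument at all. It uses Proposition~\ref{thm:2bridge} to realize $S^3\setminus L_{p/q}$ as a belted sum of two clasped $2$--bridge links, then imports \cite[Theorem 4.8]{fkp:farey}: for a $2$--bridge knot with $k+1$ twist regions, the shortest nontrivial arc from the meridian back to itself has length at least $(4\sqrt{6\sqrt2})\,k/147$, so the maximal cusp area is at least $(4\sqrt{6\sqrt2})\,k\mu/147$. Passing to the geometric limit gives the same bound for the clasped links; the total area of the cusps at $C_1$ and $C_2$ in the belted sum is twice this, and the involution of Lemma~\ref{lemma:aug-symmetries}(b) swapping $C_1$ and $C_2$ splits it evenly, so each cusp has meridian-to-meridian height at least $(4\sqrt{6\sqrt2})\,k/147$. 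Since the slope $1/n_i$ crosses the meridian $\abs{n_i}$ times, $\lmin \geq (4\sqrt{6\sqrt2})\,k\abs{n_i}/147 > 2\pi$ once $k\abs{n_i}\geq 80$. You would need to supply this (or an equivalent) external input on the cusp geometry of $2$--bridge links; it cannot be extracted from the volume lower bound of Theorem~\ref{thm:parent} alone.
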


\begin{proof}
By Definition \ref{def:aug-link}, $K$ is
obtained by $1/n_i$ filling on the component $C_i$
of $L_{p/q}$. Thus the upper bound on volume follows immediately from Theorem
\ref{thm:parent} and the fact that the volume decreases under Dehn
filling \cite[Theorem 6.5.6]{thurston:notes}.

For the lower bound, let $\lmin$ denote the minimum of the lengths of
$1/n_1$ and $1/n_2$ in some horoball expansion about the cusps
corresponding to $C_1$ and $C_2$.  Provided $\lmin > 2\pi$, Theorem
\ref{thm:fkp-dehn-filling} implies that $K$ is hyperbolic and:
\begin{eqnarray*}
\vol(S^3 \setminus K) \:  & \geq \: &
\left( \! 1 - \left(\frac{2\pi}{\lmin} \right)^{\! 2} \right)^{\! 3/2}
 \! \vol(S^3\setminus L_{p/q}) \\
 & \geq \: &
\left( \! 1 - \left( \frac{2\pi}{\lmin} \right)^{\! 2} \right)^{\!
	3/2}
 \! (4 k v_3 - 1.3536).
\end{eqnarray*}

Thus we determine some admissible values of $n_1$, $n_2$, and $k$, for which the slopes
$1/n_1$ and $1/n_2$ are both guaranteed to have length at least $2\pi$
under some horoball expansion.  First, recall that by Lemma
\ref{lemma:aug-symmetries}(a) we may arrange the diagram of $L_{p/q}$
such that the link $L_{p/q}$ is fixed under reflection in the
projection plane.  It follows immediately from \cite[Proposition
3.5]{purcell:aug-ref-tw} that there exists a horoball expansion about
these cusps such that the slope $1/n_i$ has length at least
\begin{equation}\label{eq:simple-lengh-est}
\ell_i \: \geq \:  \sqrt{1/4 + 4n_i^2}.
\end{equation}
This quantity is greater than $2\pi$ when $n_i \geq 4$.  Hence the
conclusion follows in this case.

On the other hand, when $k$ is relatively large, we can get a better
estimate on the lengths of the slopes $1/n_1$ and $1/n_2$.  In
\cite[Theorem 4.8]{fkp:farey}, we found bounds on the length of
certain arcs on the cusps of 2--bridge knots.  In particular, the
shortest non-trivial arc running from a meridian back to that meridian
in a 2--bridge knot with $(k+1)$ twist regions has length at least
$(4\sqrt{6\sqrt 2}) \, k / 147$.  In other words, the area of the
maximal cusp about the knot is at least $(4\sqrt{6\sqrt 2}) \, k \cdot
\mu / 147$, where $\mu$ is the length of the meridian.  Since a
clasped $2$--bridge link of slope $p/q$ is the geometric limit of
$2$--bridge knots with $(k+1)$ twist regions, the same estimate
applies to the clasped link.

Now, by Proposition \ref{thm:2bridge}, $L_{p/q}$ is obtained as the
belted sum of clasped $2$--bridge links of slope $p/q$ and $-p/q$.
Consider what happens to the cusps during the gluing process.  The
cusps about $C_1$ and $C_2$ come from the knot component(s) of the
clasped link, i.e. the component(s) which form the 2--bridge link
rather than the clasp.  The meridians of $C_1$ and $C_2$ agree with
meridians of the 2--bridge link, and both have length $\mu$.
Furthermore, the total area of the cusps about $C_1$ and $C_2$ is
equal to twice the area of the cusp about the $2$--bridge knot, namely
at least $(8\sqrt{6\sqrt 2})\,k \cdot \mu /147$.  But by Lemma
\ref{lemma:aug-symmetries}(b), there is a symmetry of $L_{p/q}$
interchanging $C_1$ and $C_2$, hence each of those cusps has area at
least $(4\sqrt{6\sqrt 2}) \, k \cdot \mu /147$.  As a result, in each
of $C_1$ and $C_2$, the shortest non-trivial arc running from a
meridian back to that meridian has length at least
$(4\sqrt{6\sqrt 2})\, k /147$.

Finally, note that the slope $1/n_i$ crosses the meridian exactly
$\abs{n_i}$ times.  Since each non-trivial arc from the meridian to
the meridian has length bounded as above, the total length of the
slope is at least $(4\sqrt{6\sqrt 2}) \, k \abs{n_i} / 147$.  When
$k \abs{n_i} \geq 80$, the slope $1/n_i$ will be longer than $2\pi$,
and the desired volume estimate follows.
\end{proof}


Theorem \ref{thm:simple-volume}, stated in the introduction, is now
an immediate corollary of Theorem \ref{thm:fkp-volbound}.

\begin{theorem}\label{thm:simple-volume}
Let $p$ and $q$ be relatively prime integers with $0<p<q$, and let $k$ be the length of the continued fraction expansion of $p/q$. Let $K$ be a $(p,q)$ double coil knot, in which each generalized twist region has at least $4$ full twists. Then $K$ is hyperbolic, and
$$0.9718 \, k - 0.3241 \: \leq \:  \vol(S^3 \setminus K) \: < \: 4v_8k,$$
\end{theorem}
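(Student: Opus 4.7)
The plan is to derive this as a direct numerical specialization of Theorem \ref{thm:fkp-volbound} under its hypothesis (1). Since each generalized twist region has at least $4$ full twists, we have $\abs{n_i} \geq 4$ for $i=1,2$, so hypothesis (1) is satisfied and the theorem applies; in particular $K$ is hyperbolic and the upper bound $\vol(S^3 \setminus K) < 4 v_8 k$ is inherited verbatim. The upper bound therefore requires no new argument at all.

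For the lower bound, what remains is to unpack the constant $\bigl(1 - 4\pi^2/\ell\bigr)^{3/2}(4 k v_3 - 1.3536)$ of Theorem \ref{thm:fkp-volbound} in the regime under consideration. With $n = \min\{\abs{n_1}, \abs{n_2}\} \geq 4$, the first term in the definition \eqref{eq:ell} of $\ell$ gives
\[
\ell \: \geq \: \tfrac{1}{4} + 4 n^2 \: \geq \: \tfrac{1}{4} + 64 \: = \: 64.25,
\]
and the second term can only make $\ell$ larger, so this bound is valid for every admissible $k$. Consequently
\[
1 - \frac{4\pi^2}{\ell} \: \geq \: 1 - \frac{4\pi^2}{64.25},
\]
and raising to the $3/2$ power yields a concrete numerical lower bound for the Dehn filling factor.

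The last step is to multiply through by $4 k v_3 - 1.3536$ with $v_3 = 1.0149\ldots$ and verify that the resulting affine function of $k$ is bounded below by $0.9718\, k - 0.3241$; this is a routine arithmetic check. There is no conceptual obstacle here: the whole proof is a one-line appeal to Theorem \ref{thm:fkp-volbound} followed by substituting $n \geq 4$ into $\ell$ and simplifying. The only place to be slightly careful is to use the simpler bound $\ell \geq 1/4 + 4n^2$ (rather than the $k$-dependent alternative), since this is what guarantees a lower bound linear in $k$ with the stated constants uniformly in $p/q$.
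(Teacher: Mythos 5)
Your proposal is correct and is essentially identical to the paper's own proof: both reduce Theorem \ref{thm:simple-volume} to Theorem \ref{thm:fkp-volbound} via hypothesis (1), note that $\abs{n_i}\geq 4$ forces $\ell \geq 1/4+4\cdot 16 = 64.25$ in equation (\ref{eq:ell}), and then substitute numerical values to obtain $0.9718\,k - 0.3241$. The upper bound is inherited verbatim in both arguments, so there is nothing further to add.
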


\begin{proof}
If $\abs{n_i} \geq 4$ for $i = 1, 2$, then $\ell \geq 64.25$ in
equation (\ref{eq:ell}).  Plugging this estimate into Theorem
\ref{thm:fkp-volbound}, and substituting the numerical values of all
the constants in the lower bound on volume, gives the desired result.
\end{proof}

\section{Volume and twist number}\label{sec:volume-twist}

The twist number of a knot $K$, denoted by $\tau(K)$, is defined to be
the minimum twist number over all knot diagrams of $K$; it is clearly
an invariant of $K$.  In this section, we describe a general
construction of hyperbolic knots with bounded volume and arbitrarily
large twist number.

\begin{theorem}\label{thm:big-twisting}
Let $K$ be a knot in $S^3$, and let $U \subset S^3\setminus K$ be an
unknot in $S^3$ with the property that $S^3 \setminus (K \cup U)$ is
hyperbolic.  Suppose that every disk bounded by $U$ intersects $K$ at
least three times.  Let $K_n$ be the knot obtained by $1/n$ surgery on
$U$.  Then, for $\abs{n}$ sufficiently large, $K_n$ is a hyperbolic
knot, with
$$\vol(S^3 \setminus  K_n) < \vol(S^3 \setminus (K \cup U))
\quad \mbox{and} \quad \lim_{\abs{n}\to \infty} \tau(K_n)=\infty.
$$
\end{theorem}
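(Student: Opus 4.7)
The hyperbolicity conclusion and the volume inequality are immediate from Thurston's hyperbolic Dehn surgery theorem applied to $M := S^3 \setminus (K \cup U)$. Since $M$ is hyperbolic by hypothesis, all but finitely many slopes on the cusp of $U$ produce hyperbolic Dehn fillings of volume strictly less than $\vol(M)$; hence for $|n|$ sufficiently large, $K_n = M(1/n)$ is hyperbolic and $\vol(S^3 \setminus K_n) < \vol(M)$. The same theorem yields the geometric convergence $S^3 \setminus K_n \to M$ as $|n| \to \infty$, which I will exploit below.

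For the assertion $\tau(K_n) \to \infty$, I would argue by contradiction. Suppose $\tau(K_{n_i}) \leq T$ along a subsequence with $|n_i| \to \infty$. For each $i$, pick a diagram $D_i$ of $K_{n_i}$ realizing the twist number, and form its augmentation $L_i$ by adding a crossing circle around each twist region and deleting all full twists; the resulting diagram of $L_i$ has at most $5T$ crossings. Only finitely many link types have diagrams with bounded crossing number, so after passing to a subsequence every $S^3 \setminus L_i$ is homeomorphic to a fixed hyperbolic manifold $N$ with $T+1$ cusps, and each $S^3 \setminus K_{n_i}$ is obtained from $N$ by Dehn filling the $T$ crossing--circle cusps with slopes $1/m_{i,j}$. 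Passing to a further subsequence, each $m_{i,j}$ is either eventually constant or has $|m_{i,j}| \to \infty$. Filling $N$ on the cusps with constant $m_{i,j}$ produces a hyperbolic manifold $N'$ whose remaining fillings have slopes of length tending to infinity; by Thurston's theorem, $S^3 \setminus K_{n_i} \to N'$ geometrically. Combined with the first paragraph, uniqueness of the geometric limit forces $N' \cong M$. Since $M$ has exactly two cusps, precisely one crossing circle $\tilde C$ of $N$ remains unfilled in $N'$, and $\tilde C$ bounds an embedded disk in $S^3$ meeting the remaining knot component transversely in two points.

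To finish, the two--punctured disk bounded by $\tilde C$ is an essential thrice--punctured sphere $\Sigma_0 \subset N'$; under the homeomorphism $N' \cong M$, it maps to an essential thrice--punctured sphere $\Sigma \subset M$, whose single--meridian boundary lies on one of the two cusps of $M$. The hypothesis that every disk bounded by $U$ meets $K$ in at least three points rules out that single--meridian boundary being on the $U$--cusp; hence it lies on the $K$--cusp, forcing $K$ to be unknotted and to bound a disk $D \subset S^3$ meeting $U$ in exactly two points. The main technical obstacle is now to contradict the hypothesis from here, by converting $D$ into a disk bounded by $U$ with at most two intersections with $K$. The plan is to tube $D$ along one of the two arcs of $U$ cut off by the two intersection points, using the hyperbolicity of $M$ to ensure the resulting surface is essential, and thereby produce a disk bounded by $U$ with at most two intersections with $K$; in my view, cleanly executing this final swap of roles of $K$ and $U$ is the main technical subtlety of the proof.
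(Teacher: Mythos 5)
Your first two-thirds tracks the paper's argument closely: hyperbolicity and the volume inequality from Thurston's Dehn surgery theorem, then a contradiction via bounded twist number, finitely many fully augmented links, a subsequence obtained by filling a single parent link, and the observation that exactly one crossing-circle filling coefficient can blow up because the geometric limit $M=S^3\setminus(K\cup U)$ has only two cusps. The divergence --- and the gap --- is in how you close the argument. The paper does not transport the twice-punctured disk through an abstract homeomorphism $N'\cong M$ at all. Instead it observes that for $\abs{n}$ large, the core $U$ of the original filling solid torus is the \emph{unique} shortest geodesic of $S^3\setminus K_n$, and so is the core $U'$ of the surviving crossing-circle filling; hence $U$ and $U'$ are the same geodesic in $S^3\setminus K_n$. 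Then $U'$ bounds a disk meeting $K_n$ twice while every disk bounded by $U$ meets $K_n$ at least three times, an immediate contradiction. This sidesteps both of the difficulties you run into.

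Those difficulties are real, not merely technical subtleties. First, a homeomorphism $N'\cong M$ of two-component link complements need not respect the peripheral framings coming from $S^3$, so the image of the twice-punctured disk bounded by $\tilde C$ is an essential thrice-punctured sphere in $M$ whose boundary slopes you do not control; in particular you cannot conclude that its single boundary component is the $0$--framed longitude of whichever cusp it lands on, which is exactly what you would need to cap it off to a disk in $S^3$. Second, even granting that the surface caps off and lands on the $K$--cusp, the proposed ``swap of roles'' does not go through: the hypothesis that $K$ bounds a disk $D$ meeting $U$ twice does not yield a disk bounded by $U$ meeting $K$ at most twice (tubing $D$ along an arc of $U$ produces an annulus or a once-punctured torus, not a disk bounded by $U$, and the symmetry you want simply fails for general two-component links). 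You flagged this step as the main obstacle yourself; as written it is a genuine gap, and the cleanest repair is the paper's: invoke the uniqueness of the shortest geodesic to identify $U$ with $U'$ inside $S^3\setminus K_n$ before any drilling or surface transport is attempted.
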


\begin{proof}
By Thurston's hyperbolic Dehn surgery theorem \cite{thurston:notes},
$K_n$ is hyperbolic for $\abs{n}$ large enough.  Furthermore, because
volume decreases strictly under Dehn filling \cite[Theorem
6.5.6]{thurston:notes}, we have $\vol(S^3 \setminus K_n) < \vol(S^3
\setminus (K\cup U))$.  Now the complement $S^3 \setminus (K\cup U)$
is the geometric limit of $S^3 \setminus K_n$, as $\abs{n}\to \infty$.
By the proof of the hyperbolic Dehn surgery theorem, for $n$ large
enough, the core of the Dehn filling torus (that is, $U$) is the
unique minimum--length geodesic in $S^3 \setminus K_n$, and the length
of this geodesic goes to $0$ as $n\to \infty$.  Note that, since $K_n$
is obtained from $K$ by twisting about the disk bounded by $U$, this
disk intersects $K_n$ the same number of times as $K$, namely at least
three.

Now we argue that $\lim_{\abs{n}\to \infty} \tau(K_n)=\infty$.
Suppose not: assume that $\tau(K_n)$ is bounded independently of $n$.
Then consider the knot $K_n$.
Take a diagram of $K_n$ for which the twist number is minimal, equal
to $\tau(K_n)$.  Encircle each twist region of the diagram by a
crossing circle.  The result is a \emph{fully augmented link}, and the
knot $K_n$ is obtained by Dehn filling this fully augmented link. 
(For an example with two twist regions, compare Figure
\ref{fig:dfr-knot}, left to Figure \ref{fig:dfr-link}, left.)

To obtain the standard diagram of a fully augmented link, we remove
all pairs of crossings (full twists) from each crossing circle.  (See
also, for example, Figure 6 of \cite{fkp:volume}.)  Thus the standard
diagram of a fully augmented link with $\tau$ twist regions consists
of $\tau$ crossing circles encircling two strands each, possibly with
a single crossing at each crossing circle.  This can be represented by
a 4-valent graph with $\tau$ vertices, and a choice of crossing at
each vertex.  Since $\tau(K_n)$ is bounded independently of $n$, there
are only finitely many such 4-valent graphs, so only finitely many
fully augmented links.  As a result, there must be an infinite
subsequence of knots $K_n$ that is obtained by surgery on a single
augmented link.

Recall that this subsequence converges geometrically to $S^3 \setminus
(K\cup U)$.  Thus, in each twist region in this infinite subsequence,
the number of crossings either becomes eventually constant or goes to
infinity.  Since infinitely many of the $K_n$ are distinct knots,
there is at least one twist region whose number of crossings goes to
infinity.  Furthermore, since the geometric limit of $S^3 \setminus
K_n$ is the manifold $S^3 \setminus (K\cup U)$ with exactly two cusps,
there must be exactly one twist region whose number of crossings goes
to infinity, for if the number of crossings goes to infinity, the
geometric limit yields an additional cusp \cite{thurston:notes}.  

Hence we have an infinite subsequence of the knots $K_n$ that is
obtained from a 2--component link $K' \cup U'$ by Dehn filling along
an unknotted component $U'$.  Furthermore, the subsequence $S^3
\setminus K_n$ converges geometrically to $S^3 \setminus (K' \cup
U')$, and the crossing circle $U'$ bounds a disc whose interior is
pierced exactly twice by $K'$.  Furthermore, under this convergence the
core $U'$ eventually becomes the unique minimal geodesic in
$S^3\setminus K_n$.

Now recall that for $\abs{n}$ sufficiently large, $U$ is also the
unique minimal--length geodesic in $S^3 \setminus K_n$ .  We conclude
that for $\abs{n}$ large enough there must be an isometry $S^3
\setminus K_n \longrightarrow S^3 \setminus K_n$ that maps $U$ onto
$U'$.  However, $U'$ bounds a disk whose interior is punctured twice
by $K_n$, whereas $U$ does not.  This is a contradiction.
\end{proof}

One way to construct a sequence of knots satisfying Theorem
\ref{thm:big-twisting} is the following.

\begin{corollary}\label{cor:braids}
Fix an integer $m \geq 3$. Then there is a sequence $K_n$ of
hyperbolic closed $m$--braids, such that $\vol(S^3 \setminus K_n)$ is
bounded but $\tau(K_n)$ is unbounded.
\end{corollary}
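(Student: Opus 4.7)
The plan is to apply Theorem \ref{thm:big-twisting} to the pair $(K, U)$, where $K$ is the closure of a pseudo-Anosov braid on $m$ strands and $U$ is its braid axis. Concretely, for each $m \geq 3$, I would first choose a braid $\beta \in B_m$ such that (i) $\beta$ induces a pseudo-Anosov mapping class on the $m$-punctured disk $D_m$, and (ii) $\beta$ acts as a single $m$-cycle on the braid strands, so the closure $K = \widehat{\beta}$ is a knot rather than a link. For $m=3$ one may take $\beta = (\sigma_1 \sigma_2^{-1})^k$ for suitable $k$; analogous pseudo-Anosov examples exist in $B_m$ for all $m \geq 3$. Let $U$ be the braid axis, which is unknotted and disjoint from $K$, and for which any spanning disk meets $K$ transversely in exactly $m \geq 3$ points.

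Next I would verify that $S^3 \setminus (K \cup U)$ is hyperbolic. This complement fibers over $S^1$ with fiber $D_m$ and monodromy $\beta$; Thurston's hyperbolization theorem for fibered $3$--manifolds then gives hyperbolicity precisely because $\beta$ was chosen pseudo-Anosov. Thus the hypotheses of Theorem \ref{thm:big-twisting} are satisfied. Letting $K_n$ denote the result of $1/n$ Dehn surgery on $U$, the theorem provides that for $\abs{n}$ sufficiently large, $K_n$ is hyperbolic with
\[
\vol(S^3 \setminus K_n) \: < \: \vol(S^3 \setminus (K \cup U)) \quad \text{and} \quad \lim_{\abs{n}\to \infty} \tau(K_n) = \infty.
\]
Since the right-hand side of the volume inequality is independent of $n$, the volumes are uniformly bounded while the twist numbers diverge, as required.

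It remains to check that each $K_n$ really is a closed $m$--braid. Performing $1/n$ surgery on the braid axis $U$ modifies the monodromy of the fibration of $S^3 \setminus (K \cup U)$ from $\beta$ to $\beta \cdot \Delta_m^{2n}$, where $\Delta_m^2$ is the full twist generating the center of $B_m$. The knot $K_n$ is therefore the closure of the $m$-braid $\beta \Delta_m^{2n}$. Since $\Delta_m^{2n}$ is a pure braid, the permutation of $\beta \Delta_m^{2n}$ is still an $m$-cycle, so $K_n$ is indeed a knot and a closed $m$-braid.

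The main obstacle is Step 1: exhibiting, for every $m \geq 3$, an explicit braid word in $B_m$ that is simultaneously pseudo-Anosov in the Nielsen--Thurston classification and has knot (rather than link) closure. This is standard and can be done by ad hoc constructions or by a general genericity argument (e.g.\ via Penner's positive/negative twist recipe), but the verification for arbitrary $m$ is the only nontrivial combinatorial input to the argument; once it is in place, the geometric content is supplied entirely by Theorem \ref{thm:big-twisting}.
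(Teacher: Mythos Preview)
Your approach is essentially the paper's: apply Theorem \ref{thm:big-twisting} with $K$ the closure of a pseudo-Anosov $m$-braid and $U$ the braid axis, using Thurston's hyperbolization for fibered manifolds. You are in fact more careful than the paper in ensuring the closure is a knot and in verifying that $K_n = \widehat{\beta \Delta_m^{2n}}$ remains a closed $m$-braid.

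One small imprecision: you assert that ``any spanning disk meets $K$ transversely in exactly $m \geq 3$ points.'' This is not literally true --- an arbitrary disk bounded by $U$ can meet $K$ more than $m$ times --- and the needed statement, that every such disk meets $K$ \emph{at least} three times, is exactly the hypothesis you must check for Theorem \ref{thm:big-twisting}. The paper justifies this by observing that the fiber (the $m$-punctured disk) is Thurston-norm minimizing in its homology class, so no disk bounded by the axis can meet $K$ fewer than $m$ times. An even quicker argument in your setting: since all $m$ strands of a braid traverse the axis coherently, the linking number $\mathrm{lk}(K,U) = m$, so the algebraic (hence geometric) intersection of $K$ with any spanning disk of $U$ is at least $m \geq 3$. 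With that fix your sketch is complete, and the ``main obstacle'' you flag (existence of pseudo-Anosov $m$-cycles) is indeed standard.
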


\begin{proof}
Given a closed $m$--braid $K$, let $A$ be the braid axis of $K$.  That
is: $S^3 \setminus A$ is a solid torus swept out by meridian disks,
with each disk intersecting $K$ in $m$ points.  The complement $S^3
\setminus (K \cup A)$ is a fiber bundle over $S^1$, with fiber an
$m$--punctured disk.  By a theorem of Thurston
\cite{thurston:fibered-manifolds}, this manifold will be hyperbolic
whenever the monodromy is pseudo--Anosov.  Furthermore, since the
fiber minimizes the Thurston norm within its homology class, the
unknot $A$ does not bound a disk meeting $K$ in fewer than $m$ points.
Thus Theorem \ref{thm:big-twisting} applies, and the sequence of knots
$K_n$ obtained by $1/n$ filling on $A$ has bounded volume but
unbounded twist number.
\end{proof}

For the \dfr knots studied in the last section, Theorem \ref{thm:big-twisting}
applies to give:

\begin{theorem}\label{thm:no-easy-estimate}
The volumes of hyperbolic \dfr knots are not effectively predicted by
either the twist number or generalized twist number.  More precisely:
\begin{enumerate}
\item[(a)] For any $q \geq 3$, and any $p$ relatively prime to $q$,
there exists a sequence $K_n$ of $(p,q)$ \dfr knots such that
$\tau(K_n) \to \infty$ while $\vol(S^3 \setminus K_n)$ stays bounded.
\item[(b)] All double coils have generalized twist number $2$, but their
volumes are unbounded.
\end{enumerate}
\end{theorem}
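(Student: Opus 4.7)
My plan for part (a) is to reduce the problem to a direct application of Theorem \ref{thm:big-twisting}, applied to the parent link $L_{p/q}$ of Definition \ref{def:aug-link}. Fix $p/q$ with $q \geq 3$, and choose a large integer $n_1$. First I would perform $1/n_1$ Dehn filling on the crossing circle $C_1 \subset L_{p/q}$ to produce a two--component link $J \cup U$ in $S^3$, where $U := C_2$. By Theorem \ref{thm:parent} together with Thurston's hyperbolic Dehn surgery theorem, $S^3 \setminus (J \cup U)$ is hyperbolic once $n_1$ is large enough. The component $U$ is an unknot, and by construction it bounds a disk meeting $J$ transversely in the $q$ strands of the second generalized twist region; since $q \geq 3$, the wrapping number condition of Theorem \ref{thm:big-twisting} is satisfied. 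That theorem then yields a sequence $K_n$, obtained from $J \cup U$ by $1/n$ surgery on $U$, with $\vol(S^3 \setminus K_n) < \vol(S^3 \setminus (J \cup U))$ and $\tau(K_n) \to \infty$. By Definition \ref{def:aug-link}, each $K_n$ is precisely the $(p,q)$ \dfr knot with $n_1$ and $n$ full twists in its two generalized twist regions, which is the sequence required in (a).

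For part (b), I would first observe that any $(p,q)$ \dfr knot has a defining diagram with exactly two generalized twist regions (Definition \ref{def:dfr-knot}), so its generalized twist number is at most $2$; for hyperbolic \dfr knots it cannot be $1$, since a single generalized twist region closed up into a knot would lie on an unknotted torus and hence be a torus knot. To produce volumes tending to infinity, I would choose coprime pairs $(p_n, q_n)$ with $0 < p_n < q_n$ whose continued fraction lengths $k_n$ tend to infinity, for instance by taking $q_n/p_n = [2,2,\dots,2]$ with $n$ entries. For each such pair, form a $(p_n, q_n)$ \dfr knot $K_n$ with at least four full twists in each generalized twist region. Theorem \ref{thm:simple-volume} then gives $\vol(S^3 \setminus K_n) \geq 0.9718\, k_n - 0.3241$, which is unbounded, completing (b).

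The main obstacle, in my view, is confirming in part (a) that the wrapping number hypothesis of Theorem \ref{thm:big-twisting} is genuinely satisfied by the pair $(J, U)$ obtained after the partial filling, rather than only by $L_{p/q}$ itself. Concretely, I need that after the $1/n_1$ filling on $C_1$, the $q$--punctured disk bounded by $U = C_2$ remains essential in $S^3 \setminus J$, so that every disk bounded by $U$ meets $J$ in at least $q \geq 3$ points. This should follow from hyperbolicity of $S^3 \setminus (J \cup U)$ together with the incompressibility of the $q$--punctured disk in $S^3 \setminus L_{p/q}$. Once these checks are in place, Theorem \ref{thm:big-twisting} delivers both conclusions of (a) simultaneously, and part (b) is immediate from Theorem \ref{thm:simple-volume}.
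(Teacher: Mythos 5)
Your part (b) is fine and matches the paper: the generalized twist number claim is essentially immediate from Definition \ref{def:dfr-knot}, and unbounded volume follows from Theorem \ref{thm:simple-volume} by letting the continued fraction length $k$ grow. For part (a), your overall strategy --- apply Theorem \ref{thm:big-twisting} to a partial filling of $L_{p/q}$ --- is also the paper's strategy, but the step you yourself flag as ``the main obstacle'' is a genuine gap, and your proposed fix does not close it. The hypothesis of Theorem \ref{thm:big-twisting} is that \emph{every} disk bounded by $U$ meets the knot at least three times. Neither hyperbolicity of $S^3 \setminus (J \cup U)$ nor incompressibility of the visible $q$--punctured disk rules out a \emph{different} disk bounded by $U$ that meets $J$ exactly twice: such a disk is a thrice--punctured sphere in the link complement, which is totally geodesic and perfectly compatible with hyperbolicity (twice--punctured disks bounded by unknotted components occur throughout the paper's own constructions, e.g.\ the clasp and belt components). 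Hyperbolicity only excludes disks meeting $J$ in $0$ or $1$ points.

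The paper devotes most of its proof of (a) to exactly this point. It fixes the filling on $C_2$ to be $1/6$ (rather than leaving $C_2$ unfilled, as in your setup), supposes a disk $D$ bounded by $C_1$ satisfies $\abs{D \cap K_n} \leq 2$, and splits into two cases. If $D$ is disjoint from $C_2$, then $D$ descends to a disk in $S^3 \setminus L_{p/q}$ whose intersection with the $4$--punctured projection sphere contains an arc of slope $1/0$; such an arc must cross the curve of slope $p/q$ at least $q \geq 3$ times, a contradiction. If $D$ meets $C_2$, then the planar surface $E = D \cap (S^3 \setminus L_{p/q})$ has $r \geq 1$ boundary circles running along the $1/6$ slope on $C_2$, and the Agol--Lackenby $6$--theorem bound $r\ell \leq -6\chi(E)$ forces that slope to have length at most $12$, contradicting $\ell \geq \sqrt{1/4 + 4 \cdot 36} > 12$ from equation (\ref{eq:simple-lengh-est}). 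A symmetric version of this two--case analysis would be needed in your setup (with the roles of $C_1$ and $C_2$ exchanged, and with $n_1$ chosen large enough that the $1/n_1$ slope has length exceeding $12$); without it, the application of Theorem \ref{thm:big-twisting} is unjustified.
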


\begin{proof}
Statement (b) is an immediate consequence of Definition
\ref{def:dfr-knot} and Theorem \ref{thm:simple-volume}.

For statement (a), consider the sequence $K_n$ of \dfr knots obtained
from $L_{p/q}$ by $1/n$ filling on the circle $C_1$ and $1/6$ filling
on $C_2$. When $n \geq 4$, Theorem \ref{thm:simple-volume} implies
that each $K_n$ is hyperbolic. The volumes of $S^3 \setminus K_n$ are
bounded above by the volume of $S^3 \setminus L_{p/q}$.  To apply
Theorem \ref{thm:big-twisting} and show that the twist number of $K_n$
is unbounded, we need to show that every disk bounded by $C_1$ meets
$K_n$ at least three times.

Suppose, for a contradiction, that $D$ is a disk in $S^3$ whose
boundary is $C_1$, and such that $\abs{K_n \cap D} \leq 2$.  Since
$S^3 \setminus K_n$ is hyperbolic, it cannot contain any essential
disks or annuli. Thus $K_n$ meets $D$ exactly twice. We assume that
$D$ has been moved by isotopy into a position that minimizes its
intersection number with $C_2$, and consider two cases.

\smallskip

\emph{\underline{Case 1: $D$ is disjoint from $C_2$.}}  Then when
$C_1$ and $C_2$ are drilled out of $S^3 \setminus K_n$, $D$ becomes a
disk in $S^3\setminus L_{p/q}$ that intersects $K$ twice, where $K$ is
the planar curve of slope $p/q$ that will become $K_n$ after Dehn
filling on $C_1$ and $C_2$.  Consider the standard diagram of
$L_{p/q}$, with all full--twists removed from generalized twist
regions.  Isotope $D$ so that it intersects the projection plane of
this diagram transversely a minimal number of times.
Then the intersection between $D$ and the $4$--punctured projection
sphere $S$ consists of some number of simple closed curves, as well as
exactly one arc $\alpha$ connecting two of the punctures of $S$.
(These punctures are the intersections between $C_1$ and the
projection plane --- see Figure \ref{fig:gens}.)

Since $D$ intersects $K$ twice, and
$K$ lies in the projection plane as a curve of slope $p/q$, this arc
$\alpha \subset D$ must intersect the curve of slope $p/q$ at most twice.  On
the other hand, since $\alpha$ lies in a disk whose boundary is $C_1$,
$\alpha$ must be isotopic to one half of $C_1$, in other words to an
arc of slope $1/0$ in $S$.  But it is well--known (for example, by
passing to the universal abelian cover $\RR^2 \setminus \ZZ^2$) that
in a $4$--punctured sphere, the arc of slope $1/0$ and the closed
curve of slope $p/q$ must intersect at least $q$ times. Since $q \geq
3$, this is a contradiction.

\smallskip

\emph{\underline{Case 2: $D$ is not disjoint from $C_2$.}} Let $E = D
\cap (S^3 \setminus L_{p/q})$.  Then $E$ is a sphere with $(r+3)$
holes, where one boundary circle is at the cusp of $C_1$, two boundary
components are at the cusp of $K$, and $r$ boundary components are at
the cusp of $C_2$. Consider the length $\ell$ of the Dehn filling
slope along $C_2$, where $r \geq 1$ boundary circles of $E$ run in
parallel along the cusp.  A result of Agol and Lackenby (see
\cite[Theorem 5.1]{agol:6theorem} or \cite[Lemma
3.3]{lackenby:surgery}) implies that the total length of those circles
is
$$r \, \ell \: \leq \: -6 \, \chi(E) \: = \: 6(r+1) \: \leq \: 12 r.$$
Thus $\ell \leq 12$. On the other hand, since we are filling $C_2$
along slope $1/6$, equation (\ref{eq:simple-lengh-est}) above implies
that
$$\ell \: \geq \: \sqrt{1/4 + 4 \cdot 6^2} \: = \: \sqrt{144.25} \: >
\: 12.$$ Therefore, in this case as well as in Case 1, we obtain a
contradiction.
\end{proof}


\section{Spectral geometry}\label{sec:laplacian}

In this section, we investigate the spectral geometry of \dfr knot
complements. Recall that, for a Riemannian manifold $M$, $\lam(M)$ is
defined to be the smallest positive eigenvalue of the
Laplace--Beltrami operator $\Delta f = - {\rm div \, grad} f$. When
$M$ is a hyperbolic $3$--manifold, it is known that $\lam(M)$ has many
connections to the volume of $M$. The following result is essentially
a combination of theorems by Schoen \cite{schoen:spectrum}, Dodziuk
and Randol \cite{dodziuk-randol}, Lackenby \cite{lackenby:gradient},
and Buser \cite{buser}.

\begin{theorem}\label{thm:lambda-general}
Let $M$ be an oriented, finite--volume hyperbolic $3$--manifold. Then
$$\frac{\pi^2 / 2^{50}}{\vol(M)^2} \: \leq \: \lam(M) \: \leq \: 32
\pi \, \frac{g(M)-1}{\vol(M)} + 640 \pi^2 \,
\frac{(g(M)-1)^2}{\vol(M)^2},$$ where $g(M)$ is the Heegaard genus of
$M$.
\end{theorem}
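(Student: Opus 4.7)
The plan is to establish the two inequalities separately, combining the four cited results from the literature.

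For the lower bound, I would apply Schoen's theorem \cite{schoen:spectrum}, extended by Dodziuk and Randol \cite{dodziuk-randol} to finite--volume (cusped) hyperbolic $3$--manifolds, which provides a universal constant $C>0$ with $\lam(M)\geq C/\vol(M)^2$. The underlying strategy pairs Cheeger's inequality $\lam(M)\geq h(M)^2/4$ with an isoperimetric lower bound $h(M)\geq c/\vol(M)$ on the Cheeger constant, the latter coming from the exponential volume growth of balls in $\HH^3$. Tracking the numerical constants through this argument, including the Dodziuk--Randol modifications that arise from restricting test functions to the thick part of $M$ away from the cusps, yields the explicit value $C=\pi^2/2^{50}$.

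For the upper bound, the plan is to combine two ingredients. First, Lackenby's theorem \cite{lackenby:gradient} gives
\[
 h(M) \;\leq\; \frac{8\pi\,(g(M)-1)}{\vol(M)};
\]
the idea is to sweep out $M$ by surfaces coming from a minimal--genus Heegaard splitting and bound the area of a slice realizing the Cheeger minimum via a Gauss--Bonnet estimate (the Gauss equation for a minimal surface $\Sigma$ in a curvature $-1$ manifold yields $\area(\Sigma)\leq -2\pi\chi(\Sigma)=4\pi(g(M)-1)$, and the slice bisecting the volume has each side of volume at least $\vol(M)/2$). Second, Buser's inequality \cite{buser} applied to hyperbolic $3$--manifolds gives $\lam(M)\leq 4\,h(M)+10\,h(M)^2$. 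Substituting the first bound into the second produces exactly
\[
 \lam(M) \;\leq\; 4\cdot\frac{8\pi(g(M)-1)}{\vol(M)} \;+\; 10\cdot\left(\frac{8\pi(g(M)-1)}{\vol(M)}\right)^{\!2} \;=\; \frac{32\pi(g(M)-1)}{\vol(M)} \;+\; \frac{640\pi^2(g(M)-1)^2}{\vol(M)^2}.
\]

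The main obstacle is verifying that each cited ingredient applies with the stated constants in the noncompact finite--volume setting. The Schoen--Dodziuk--Randol lower bound demands care in choosing test functions that vanish on thin cusp neighborhoods while capturing a definite proportion of the volume. Lackenby's Cheeger--constant bound and Buser's inequality, which are most cleanly stated for closed manifolds, must be interpreted via the compact core (for the Heegaard genus) and the thick--thin decomposition; with those conventions in place, both arguments carry through unchanged and the arithmetic combining them is immediate.
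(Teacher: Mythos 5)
Your upper bound is exactly the paper's argument: Buser's inequality $\lam(M)\leq 4h(M)+10h(M)^2$ combined with Lackenby's bound $h(M)\leq 8\pi(g(M)-1)/\vol(M)$, and the arithmetic is correct. (Your Gauss--Bonnet/minimal-surface sketch of why Lackenby's bound holds is not quite his sweepout argument, but since you are citing the result rather than reproving it, that is immaterial.)

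The lower bound is where your proposal has a genuine gap. You assert that ``tracking the numerical constants'' through a Cheeger-plus-isoperimetric argument, with Dodziuk--Randol's modifications for the cusps, yields the explicit value $C=\pi^2/2^{50}$. That is precisely the step that needs proof, and it is not how the constant actually arises. Dodziuk--Randol \cite{dodziuk-randol} give $\lam(M)\geq A(n)/\vol(M)^2$ with an inexplicit dimensional constant; the explicit $\pi^2/2^{50}$ comes from Schoen \cite{schoen:spectrum}, whose theorem applies only to \emph{closed} negatively curved manifolds and moreover gives $\lam(M)\geq\min\left\{1,\; \pi^2 2^{-50}/\vol(M)^2\right\}$, so one also needs a universal lower bound on $\vol(M)$ (Lemma \ref{lemma:vol-stupid}, via Meyerhoff or Gabai--Meyerhoff--Milley) to discard the $\min$ with $1$. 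Your proposal addresses neither the $\min$ nor, crucially, the passage from closed to cusped manifolds with the constant intact. The paper handles the cusped case by taking closed Dehn fillings $N_i\to M$ geometrically, so that $\vol(N_i)\to\vol(M)$, and invoking Colbois--Courtois \cite{colbois-courtois} to get $\lam(N_i)\to\lam(M)$ when $\lam(M)<1$ (the case $\lam(M)\geq 1$ again being disposed of by the volume lower bound). Without some such continuity statement, or an actual quantitative reworking of the isoperimetric argument in the presence of cusps, your lower bound with the stated constant is unsupported.
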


To write down the proof of Theorem \ref{thm:lambda-general}, we need
the following fact.

\begin{lemma}\label{lemma:vol-stupid}
An oriented, finite--volume hyperbolic $3$--manifold $M$ satisfies
$\vol(M) > \pi/2^{25}$.
\end{lemma}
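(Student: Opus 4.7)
The plan is to produce an isometrically embedded hyperbolic $3$-ball inside $M$ whose radius is bounded below by a universal constant, and to observe that its volume already exceeds $\pi/2^{25}$. The essential ingredient is the Margulis lemma in dimension three.

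First, I would recall the Margulis lemma for $\HH^3$: there is a universal constant $\epsilon > 0$, the Margulis constant, such that the $\epsilon$-thin part of any complete hyperbolic $3$-manifold $M$ (the set of points where the injectivity radius is less than $\epsilon/2$) is a disjoint union of standard neighborhoods --- tubes about short closed geodesics and horoball neighborhoods of cusps. Explicit lower bounds for $\epsilon$ are well documented in the literature (work of Meyerhoff, Shalen, and others), and any value as small as $\epsilon \geq 0.01$ is more than sufficient for our purposes.

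Next, I would argue that the $\epsilon$-thick part of $M$ is nonempty. Indeed, each component of the thin part is topologically either a solid torus (around a short geodesic) or a product $T^2 \times [0,\infty)$ (at a cusp), and since $M$ is connected and of finite volume, it cannot coincide with a single such component nor with a disjoint union of them. Hence there exists a point $p \in M$ with injectivity radius at least $\epsilon/2$, so that the metric ball $B(p,\epsilon/2) \subset M$ is isometric to a ball of radius $\epsilon/2$ in $\HH^3$. A standard polar integration yields
$$\vol\bigl(B(p, \epsilon/2)\bigr) \: = \: 4\pi \int_0^{\epsilon/2} \sinh^2(t)\, dt \: = \: \pi\bigl(\sinh(\epsilon) - \epsilon\bigr).$$

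To finish, I would combine the elementary inequality $\sinh(\epsilon) - \epsilon \geq \epsilon^3/6$ with $\epsilon \geq 0.01$ to obtain
$$\vol(M) \: \geq \: \pi\bigl(\sinh(\epsilon) - \epsilon\bigr) \: \geq \: \frac{\pi\, \epsilon^3}{6} \: \geq \: \frac{\pi \cdot 10^{-6}}{6} \: > \: \frac{\pi}{2^{25}},$$
since $2^{25} \approx 3.36 \times 10^{7} > 6 \times 10^{6}$. The only delicate point in this plan is citing an explicit positive lower bound for the Margulis constant in dimension three; everything else reduces to a routine volume computation and an easy numerical comparison.
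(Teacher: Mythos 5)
Your argument is correct, but it takes a genuinely different route from the paper. The paper simply cites the literature: Gabai--Meyerhoff--Milley's theorem that the Weeks manifold, of volume $\approx 0.9427$, is the unique smallest orientable hyperbolic $3$--manifold, noting that even Meyerhoff's 1984 bound $\vol(M) \geq 0.00064$ is several orders of magnitude more than needed. You instead rederive a (much weaker) universal lower bound from first principles: the Margulis lemma gives a point of injectivity radius at least $\epsilon/2$ in the thick part, the embedded ball there has volume $\pi(\sinh\epsilon - \epsilon) \geq \pi\epsilon^3/6$, and $\epsilon \geq 0.01$ closes the numerical gap against $\pi/2^{25}$. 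Your computation of the ball volume and the final comparison are both right. What the paper's approach buys is brevity and a vastly better constant; what yours buys is self-containedness --- except that it is not quite self-contained either, since an explicit positive lower bound on the three-dimensional Margulis constant (e.g.\ Meyerhoff's $0.104$) is itself a nontrivial citation, and is in fact part of the same circle of results the paper invokes. Two small points to tighten: the claim that the thick part is nonempty should not lean on ``finite volume'' alone, since a cusp neighborhood $T^2 \times [0,\infty)$ does have finite volume; the clean argument is that $M$ is connected, so if it were entirely thin it would equal a single tube or cusp component, whose fundamental group is virtually abelian --- impossible for a finite--volume hyperbolic $3$--manifold. And you should state which normalization of ``thin part'' you are using (shortest essential loop of length $< \epsilon$ versus injectivity radius $< \epsilon$), since the quoted values of the Margulis constant depend on that convention; with $\epsilon \geq 0.01$ you have enough slack that either convention works.
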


\begin{proof}
Gabai, Meyerhoff, and Milley recently showed
\cite{gmm:smallest-cusped} that the unique lowest--volume orientable
hyperbolic $3$--manifold is the Weeks manifold of volume $ \approx
0.9427$. This is the culmination of many increasingly sharp estimates,
by a number of hyperbolic geometers. In fact, Meyerhoff's 1984 result
\cite{meyerhoff:first-estimate} that that $\vol(M) \geq 0.00064$ is
several orders of magnitude larger than necessary for this lemma.
\end{proof}

\begin{proof}[Proof of Theorem \ref{thm:lambda-general}]
Dodziuk and Randol \cite{dodziuk-randol} showed that for all
finite--volume, hyperbolic $n$--manifolds (where $n \geq 3$), $\lam(M)
\geq A(n) /\vol(M)^2$, where the constant $A(n)$ depends only on the
dimension $n$. To estimate $A(3)$ for dimension $3$, we rely on the
work of Schoen, who gave an explicit estimate for $\lam(M)$ when $M$
is closed and negatively curved \cite{schoen:spectrum}. In the special
case where $M$ is a closed, hyperbolic $3$--manifold, his theorem says
that
$$
 \lam(M) \: \geq \: \min \left\{ 1, \;\; \frac{\pi^2}{2^{50}} \cdot
\frac{1}{\vol(M)^2} \right\} 
\: \geq \: \frac{\pi^2 / 2^{50}}{\vol(M)^2},
$$
where the second inequality is Lemma \ref{lemma:vol-stupid}.  This
completes the proof of the lower bound on $\lam(M)$ in the case where
$M$ is closed.

Now, suppose that $M$ has cusps. We may assume that $\lam(M) < 1$;
otherwise, $\lam(M)$ already satisfies the desired lower bound by
Lemma \ref{lemma:vol-stupid}.  Let $N_i$ be a sequence of closed
manifolds obtained by Dehn filling $M$, along slopes whose lengths
tend to infinity.  Thurston's Dehn surgery theorem
\cite{thurston:notes} implies that the manifolds $N_i$ approach $M$ in
the geometric topology; in particular, $\vol(N_i) \to \vol(M)$.
Meanwhile, assuming that $\lam(M) < 1$, Colbois and Courtois
\cite[Theorem 3.1]{colbois-courtois} showed that $\lam(N_i) \to
\lam(M)$. Thus, since the lower bound on $\lam$ holds for each closed
$N_i$, it also holds for $M$.

The upper bound on $\lam(M)$ is a combination of results by Buser
\cite{buser} and Lackenby \cite{lackenby:gradient}.  Buser proved an
inequality relating $\lam(M)$ to the \emph{Cheeger constant} $h(M)$,
defined by
$$h(M) := \inf\left\{ \frac{\area(S) }{\min(V_1, V_2) } \right\},$$
where $S$ is a separating surface in $M$, and $V_1$, $V_2$ are the
volumes of the two pieces separated by $S$.  Buser's result
\cite{buser} says that
$$\lam(M) \leq 4h(M) + 10h(M)^2.$$
More recently, Lackenby showed \cite[Theorem 4.1]{lackenby:gradient}
that if a hyperbolic manifold $M$ has a genus--$g$ Heegaard splitting,
$$h(M) \leq \frac{8 \pi (g-1)}{\vol(M)}. $$ Plugging this estimate
into Buser's inequality yields the upper bound on $\lam(M)$.
\end{proof}

For \dfr knots, Theorem \ref{thm:lambda-general} implies the following
result.

\begin{theorem}\label{thm:dfr-lambda-bound}
Let $K$ be a hyperbolic \dfr knot. Then
$$ \frac{A_1}{\vol(S^3 \setminus K)^2} \: \leq \: \lam(S^3 \setminus
K) \: \leq \: \frac{A_2}{\vol(S^3 \setminus K)},$$
where $A_1 \geq  8.76 \times 10^{-15}$ and $A_2 \leq 12650$.
\end{theorem}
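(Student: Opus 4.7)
The lower bound is essentially free. The universal estimate $\lam(M) \geq \pi^2/(2^{50} \vol(M)^2)$ from Theorem \ref{thm:lambda-general} applies to any orientable, finite--volume hyperbolic $3$--manifold, and a direct computation gives $\pi^2/2^{50} > 8.76 \times 10^{-15}$. Thus the entire substance of the theorem is contained in the upper bound.

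My plan for the upper bound is to invoke the second half of Theorem \ref{thm:lambda-general} and absorb the $1/\vol^2$ term into the $1/\vol$ term using two additional ingredients. First, I need a bound on the Heegaard genus $g(S^3 \setminus K)$. Here I will use the fact, noted just after Definition \ref{def:dfr-knot}, that every \dfr knot embeds on an unknotted genus--$2$ surface $\Sigma \subset S^3$. Since $\Sigma$ is a standard genus--$2$ Heegaard surface of $S^3$, tubing $\Sigma$ along a short arc running through a neighborhood of $K$ produces a Heegaard surface of $S^3 \setminus K$ of genus $3$; equivalently, the two crossing circles $C_1, C_2$ of Definition \ref{def:aug-link} provide a $2$--arc tunnel system for $K$. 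Either way, $g(S^3 \setminus K) \leq 3$. Second, I need a universal lower bound on the volume of a cusped hyperbolic $3$--manifold, for which I will cite the Cao--Meyerhoff theorem that the minimum is attained by the figure--eight knot complement at volume $\approx 2.0298$.

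With $g-1 \leq 2$ and $\vol(S^3 \setminus K) \geq 2.0298$ in hand, Theorem \ref{thm:lambda-general} gives
$$\lam(S^3 \setminus K) \: \leq \: \frac{1}{\vol(S^3\setminus K)}\left(64\pi + \frac{2560\pi^2}{\vol(S^3\setminus K)}\right) \: \leq \: \frac{1}{\vol(S^3\setminus K)}\left(64\pi + \frac{2560\pi^2}{2.0298}\right),$$
and a routine arithmetic check shows the bracketed constant is less than $12650$, yielding the stated $A_2$. The main step that is not simply a citation or a numerical verification is the Heegaard genus estimate; I expect this to be straightforward once the compression--body decomposition arising from the genus--$2$ embedding is written down carefully, and no other step should present difficulty.
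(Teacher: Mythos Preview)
Your overall architecture matches the paper's proof exactly: the lower bound is read off from Theorem~\ref{thm:lambda-general}, and the upper bound comes from plugging a Heegaard--genus bound and the Cao--Meyerhoff minimal--volume bound into that same theorem. The numerics line up as well.

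The one substantive difference is how you justify $g(S^3\setminus K)\le 3$. The paper does not use the genus--$2$ embedding at all. Instead it observes from Figure~\ref{fig:scheme} that $L_{p/q}$ admits a $3$--bridge diagram (each box contains a rational tangle, hence a braid), so $S^3\setminus L_{p/q}$ has Heegaard genus at most $3$; since Heegaard genus cannot increase under Dehn filling, the same bound passes to $S^3\setminus K$. This is essentially a two--line argument.

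Your route through the double--torus structure is not as innocent as you suggest. Tubing $\Sigma$ once certainly produces a closed genus--$3$ surface, but you must still verify that \emph{both} complementary pieces in $E(K)$ are compression bodies --- that is the entire content of ``Heegaard surface,'' and it is not automatic. For an arbitrary simple closed curve $K$ on a genus--$2$ Heegaard surface, the naive spine argument (fill $\Sigma\setminus K$ with arcs until the complement is a disk) yields only tunnel number $\le 3$, i.e.\ Heegaard genus $\le 4$; getting down to $3$ requires something specific about how $K$ sits on $\Sigma$, namely some primitivity condition on one side that you have not checked. Likewise, $C_1$ and $C_2$ are closed curves rather than arcs, so calling them a ``$2$--arc tunnel system'' needs an actual construction behind it. The bound you expect is correct for double coil knots, but the paper's $3$--bridge argument reaches it without any of this case analysis.
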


\begin{proof}
The lower bound on $\lam$ is a restatement of Theorem
\ref{thm:lambda-general}.  Note $\pi^2 / 2^{50} \approx 8.765 \times
10^{-15}.$

To establish the upper bound on $\lam$, we bound the Heegaard genus of
$S^3 \setminus K$.  Recall that $K$ is obtained by Dehn filling two
components of the link $L_{p/q}$ depicted in Figure \ref{fig:scheme}.
Since each of the boxes in Figure \ref{fig:scheme} contains a braid,
the figure is a $3$--bridge diagram of $L_{p/q}$.  It is well--known
that a $g$--bridge link $L$ has Heegaard genus at most $g$.  (One
standard way to obtain a Heegaard surface is to connect the maxima in
a $g$--bridge diagram of $L$ by $g-1$ arcs, thicken the union of $L$
and these arcs, and take the boundary of the resulting genus--$g$
handlebody. The exterior of this handlebody is unknotted, because $L$
was in bridge position. See \cite[Figure 1]{sakuma:survey}.)  Thus
$S^3 \setminus L_{p/q}$ has Heegaard genus at most $3$.  Since
Heegaard genus can only go down under Dehn filling, $S^3 \setminus K$
also has Heegaard genus at most $3$.

Plugging $g(S^3 \setminus K) \leq 3$ into Theorem
\ref{thm:lambda-general}, we obtain {\setlength{\jot}{1.3ex}
\begin{eqnarray*}
\lam(S^3 \setminus K) & \leq &
 \frac{64 \pi}{\vol(S^3 \setminus K)} \, + \,
 \frac{2560 \pi^2}{\vol(S^3 \setminus K)^2}  \\
&\leq&  \frac{64 \pi}{\vol(S^3 \setminus K)} \, + \,
 \frac{2560 \pi^2}{\vol(S^3 \setminus K) \cdot 2v_3}   \\
&<& \frac{12650}{\vol(S^3 \setminus K)},
\end{eqnarray*}
}
where the second inequality follows because the smallest--volume
knot is the figure--8 knot, with $\vol(S^3 \setminus K) = 2v_3$
\cite{cao-meyerhoff}.
\end{proof}

A collection $\{ M_i\}$ of hyperbolic $3$--manifolds is called an
\emph{expanding family} if $ \inf \{\lam(M_i)\}>0$, that is,
$\lam(M_i)$ is bounded away from $0$. With this notation, Theorem
\ref{thm:dfr-lambda-bound} has the following immediate corollary.

\begin{corollary}\label{cor:dfr-expanding}
Let $\{ K_i\}$ be a collection of hyperbolic \dfr knots.  Then
$\{\lam(S^3 \setminus K_i)\}$ is bounded away from $0$ if and only if
$ \{\vol(S^3 \setminus K_i)\}$ is bounded above.  In other words, the
knots $\{ K_i\}$ form an expanding family if and only if their volumes
are bounded.
\end{corollary}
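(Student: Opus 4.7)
The plan is to deduce the corollary directly from the two-sided bounds established in Theorem \ref{thm:dfr-lambda-bound}. The theorem provides inequalities of the form
\[
\frac{A_1}{\vol(S^3 \setminus K)^2} \leq \lam(S^3 \setminus K) \leq \frac{A_2}{\vol(S^3 \setminus K)}
\]
with explicit positive constants $A_1$ and $A_2$. The two directions of the ``if and only if'' follow, respectively, from the lower and upper bound, by simple algebraic manipulation.

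For the direction ``volumes bounded implies $\lam$ bounded below,'' I would suppose that there is a uniform constant $V$ with $\vol(S^3 \setminus K_i) \leq V$ for all $i$. Applying the lower bound from Theorem \ref{thm:dfr-lambda-bound},
\[
\lam(S^3 \setminus K_i) \geq \frac{A_1}{\vol(S^3 \setminus K_i)^2} \geq \frac{A_1}{V^2} > 0,
\]
so $\inf_i \lam(S^3 \setminus K_i) > 0$ and $\{K_i\}$ forms an expanding family.

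For the converse direction, I would argue the contrapositive: suppose $\{\vol(S^3 \setminus K_i)\}$ is unbounded. Then I can pass to a subsequence with $\vol(S^3 \setminus K_{i_j}) \to \infty$. The upper bound from Theorem \ref{thm:dfr-lambda-bound} gives
\[
\lam(S^3 \setminus K_{i_j}) \leq \frac{A_2}{\vol(S^3 \setminus K_{i_j})} \longrightarrow 0,
\]
so $\inf_i \lam(S^3 \setminus K_i) = 0$ and the family is not expanding. There is essentially no obstacle here: once Theorem \ref{thm:dfr-lambda-bound} is in hand, the corollary is merely a restatement of its content in the language of expanding families. The only minor care needed is to note that the inequalities apply uniformly to every hyperbolic \dfr knot, so the constants $A_1, A_2$ are independent of $i$.
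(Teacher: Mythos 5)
Your argument is correct and is exactly the derivation the paper intends: the paper presents this corollary as an immediate consequence of Theorem \ref{thm:dfr-lambda-bound}, with the lower bound giving one direction and the upper bound the other. Your explicit write-out of the two directions, including the uniformity of $A_1$ and $A_2$, matches the paper's (implicit) reasoning.
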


Corollary \ref{cor:dfr-expanding} is significant in light of recent
work of Lackenby \cite{lackenby:spectrum}.  He showed that for two
large families of hyperbolic links (namely, alternating links and
highly twisted links), $\lam(S^3 \setminus K)$ is bounded above in
terms of the inverse of the twist number of a sufficiently reduced
diagram.  Because the volumes of these links are also governed by the
twist number \cite{fkp:volume, lackenby:volume}, it follows that
alternating and highly twisted links form an expanding family if and
only if their volumes are bounded \cite[Corollary
1.7]{lackenby:spectrum}.  Corollary \ref{cor:dfr-expanding} is the
analogous result for \dfr knots.

On the other hand, by Theorem \ref{thm:no-easy-estimate}, the volumes
of \dfr knots are not governed by the twist number in any meaningful
sense. Thus, combining Theorem \ref{thm:no-easy-estimate} with
Corollary \ref{cor:dfr-expanding} yields the following result.

\begin{corollary}\label{cor:lambda-twist}
The spectrum of the Laplacian of hyperbolic \dfr knots is not
effectively predicted by either the twist number or generalized twist
number. More precisely:
\begin{enumerate}
\item[(a)] For any $q \geq 3$, and any $p$ relatively prime to $q$,
there exists a sequence $K_n$ of $(p,q)$ double coils such that
$\tau(K_n) \to \infty$ while $\lam(S^3 \setminus K_n)$ is bounded away
from $0$ and $\infty$.
\item[(b)] All \dfr knots have generalized twist number $2$, but the
	infimum of $\{\lam(S^3 \setminus K_n)\}$ is zero.
\end{enumerate}
\end{corollary}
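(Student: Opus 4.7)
The plan is to deduce the corollary by combining Theorem \ref{thm:no-easy-estimate} with the spectral sandwich of Theorem \ref{thm:dfr-lambda-bound}, essentially using each of the bounds in the latter theorem for one of the two parts. The key observation is that Theorem \ref{thm:dfr-lambda-bound} converts qualitative statements about volume into qualitative statements about $\lam$: bounded volume means bounded $\lam$ (bounded away from $0$ and $\infty$), while unbounded volume forces $\lam \to 0$. So both parts of this corollary are essentially already contained in Theorem \ref{thm:no-easy-estimate}, and the work is purely a matter of translation.

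For part (a), I would take the sequence $K_n$ of $(p,q)$ \dfr knots produced by Theorem \ref{thm:no-easy-estimate}(a). That theorem already gives $\tau(K_n) \to \infty$ and $\vol(S^3 \setminus K_n)$ bounded above, say by some constant $V$. To apply Theorem \ref{thm:dfr-lambda-bound} I also need a lower bound on volume, but this is automatic: every orientable hyperbolic $3$--manifold has volume at least $0.9427$ (the Weeks manifold), a fact already invoked as Lemma \ref{lemma:vol-stupid}. Combining these with the two--sided bound
$$\frac{A_1}{\vol(S^3 \setminus K_n)^2} \: \leq \: \lam(S^3 \setminus K_n) \: \leq \: \frac{A_2}{\vol(S^3 \setminus K_n)}$$
immediately sandwiches $\lam(S^3 \setminus K_n)$ between two positive constants, as required.

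For part (b), I would reuse the sequence $K_n$ from Theorem \ref{thm:no-easy-estimate}(b) (or simply invoke Theorem \ref{thm:simple-volume} to produce one directly: fix twists $\abs{n_i} \geq 4$ in both generalized twist regions and let the continued fraction length $k$ of $p/q$ go to infinity, which forces $\vol(S^3 \setminus K_n) \geq 0.9718\, k - 0.3241 \to \infty$ while keeping the generalized twist number equal to $2$ by Definition \ref{def:dfr-knot}). Then the upper bound $\lam(S^3 \setminus K_n) \leq A_2 / \vol(S^3 \setminus K_n)$ from Theorem \ref{thm:dfr-lambda-bound} gives $\lam(S^3 \setminus K_n) \to 0$, so the infimum is zero.

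There is no real obstacle here; the only mild subtlety is justifying that the relevant sequences actually consist of hyperbolic \dfr knots so that Theorem \ref{thm:dfr-lambda-bound} applies. For part (a) this is taken care of by Theorem \ref{thm:no-easy-estimate}(a) (and ultimately by Theorem \ref{thm:fkp-volbound} with the condition $k \abs{n_i} \geq 80$); for part (b) the condition $\abs{n_i} \geq 4$ together with Theorem \ref{thm:simple-volume} guarantees hyperbolicity. With that in place the corollary follows immediately.
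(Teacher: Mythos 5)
Your proposal is correct and matches the paper's argument, which likewise simply combines the two--sided bound of Theorem \ref{thm:dfr-lambda-bound} with the corresponding parts of Theorem \ref{thm:no-easy-estimate}. The extra details you supply (the universal lower volume bound to keep $\lam$ away from $\infty$ in part (a), and hyperbolicity via Theorem \ref{thm:simple-volume}) are exactly the implicit steps the paper leaves to the reader.
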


\begin{proof}
Each part of this corollary follows by combining Theorem
\ref{thm:dfr-lambda-bound} with the corresponding part of Theorem
\ref{thm:no-easy-estimate}.
\end{proof}

\bibliographystyle{hamsplain} \bibliography{biblio.bib}

\end{document}